\mathchardef\emptyset="001F
\theoremstyle{plain}
\newtheorem{theorem}{Theorem}[section]
\newtheorem{lemma}[theorem]{Lemma}
\newtheorem{corollary}[theorem]{Corollary}
\newtheorem{remark}[theorem]{Remark}
\newtheorem{definition}[theorem]{Definition}
\theoremstyle{definition}
\theoremstyle{remark}
\numberwithin{equation}{section}
\newcommand{\dpr}{{\mathcal D}'}
\newcommand{\Div}{\mathrm{Div}}
\newcommand{\e}{\varepsilon}
\newcommand{\Om}{\Omega}
\newcommand{\weakst}{\stackrel{\ast}{\rightharpoonup}}
\newcommand{\weak}{\rightharpoonup}
\newcommand{\R}{{\mathbb R}}
\newcommand{\Mcal}{{\mathcal M}}
\newcommand{\Acal}{{\mathcal A}}
\newcommand{\Lcal}{{\mathcal L}}
\newcommand{\A}{{\mathbb A}}
\newcommand{\Int}{{\rm Int}}
\newcommand{\N}{{\mathbb N}}
\newcommand{\M}{{\mathbb M}}
\newcommand{\U}{{\mathcal U}}
\newcommand{\Mmn}{\M^{{m\times n}}}
\newcommand{\Mnn}{{\mathbb M^{n\times n}}}
\newcommand{\Mnns}{\M^{n\times n}_{\mathrm skw}}
\newcommand{\Mtten}{\M^{{2\times 10}}}
\newcommand{\Mtts}{\M^{3\times 3}_{\mathrm skw}}
\newcommand{\Mtwo}{\M^{2 \times 3}}
\renewcommand{\S}{{\mathbb S}}
\newcommand{\varro}{\varrho}
\newcommand{\no}{\noindent}
\newcommand{\non}{\nonumber}
\renewcommand{\div}{\hbox{{\rm div}}}
\newcommand{\supp}{\hbox{{\rm supp}}}
\newcommand{\Lin}{\hbox{{\rm Lin}}}
\newcommand{\dsp}{\displaystyle}
\newcommand{\rank}{\mathrm{rank}}
\newcommand{\dist}{\mathrm{dist}}
\newcommand{\curl}{{\rm curl}\,}
\newcommand{\Vd}{V_{\delta}}
\newcommand{\Gd}{G_{\delta}}
\renewcommand{\t}{\theta}
\newcommand{\tP}{\widetilde P}
\title[]{On a differential inclusion related to the Born-Infeld equations}
\author[Stefan M\"uller]{Stefan M\"uller$^{1}$}
\author[Mariapia Palombaro]{Mariapia Palombaro$^{2}$}
\begin{document}
\baselineskip3.15ex
\vskip .3truecm

\maketitle
\small
\noindent
$^1$ Hausdorff Center for Mathematics $\&$ Institute for Applied Mathematics, Universit\"at Bonn, Endenicher Allee 60, 53115 Bonn, Germany.
Email: sm@hcm.uni-bonn.de\\
\noindent
$^2$ Department of Information Engineering, Computer Science and Mathematics, 
University of L'Aquila, Via Vetoio 1, 67100 L'Aquila, Italy.
E-mail:  mariapia.palombaro@univaq.it

\vspace{2mm}

\begin{abstract}
We study a partial differential relation that arises in the context of the Born-Infeld equations
(an extension of the Maxwell's equations) by using Gromov's method of 
convex integration in the setting of divergence free fields.
\\
\vskip.3truecm
\noindent  {\bf Key words}: solenoidal fields, $\Acal$-quasiconvexity, convex integration, Born-Infeld equations.
\vskip.2truecm
\noindent  {\bf 2000 Mathematics Subject Classification}: 49J45, 49K21, 35L65.
\end{abstract}

\section{Introduction}

Let $\Om\subset\R^n$ be an open bounded set and let $K\subset\Mmn$ be a set of 
$m\times n$ real matrices. We study the problem of whether there exist solutions to the 
differential inclusion

\begin{equation}\label{general-inclusion}
\begin{cases}
\Div V=0 &\,\,\text{ in }\dpr(\Om;\R^m)\,,\\
V\in K  &\,\,\text{ a.e. in }\Om \,,\\
 -\hspace{-3.5mm}\int_{\Om} V= F \,,
\end{cases}
\end{equation}
 
\no
for some given $F\in\Mmn$.  
Our interest in this question arises, in particular, from applications to the study of the Born-Infeld equations. 
In fact, we will consider a special case of \eqref{general-inclusion}, when $m=2$, $n=3$, and the set $K$ 
is related to the so-called Born-Infeld manifold.
Further applications of solenoidal differential inclusions can be found in the study of composite materials, 
as well as linear elasticity and fluid mechanics 
(see, e.g., \cite{garroni-nesi}, \cite{palombaro-smysh}, \cite{delellis-szek}, \cite{delellis-szek2 }). 
More generally, problem \eqref{general-inclusion} falls into the framework of $\Acal$-quasiconvexity, 
where the differential constraint on the function $V$ is replaced by more general ones 
(see, e.g., \cite{fon-muller} and \cite{p} for related issues).

\par
Our approach to \eqref{general-inclusion} is based on studying the method of convex integration  
in the  div-free setting.
Convex integration  has been introduced and developed by Gromov to solve partial differential 
relations, in particular  in connection with geometric problems. 
An important problem is to find gradient fields  that take values in a prescribed set of matrices. This can be
written as the partial differential relation 
\begin{equation}  \label{eq:pdr_gradient}
\nabla u\in K\,.
\end{equation}

\no
We refer to Gromov's treatise \cite{gromov}  for a detailed exposition and further references concerning 
the existence of $C^1$ solutions. Gromov only very briefly discusses the existence of Lipschitz solutions
(see \cite{gromov}, p. 218)
and a more detailed  theory of Lipschitz solutions has been developed in a number of contributions including
\cite{muller-sverak,dacorogna_marcellini_acta,dacorogna_marcellini_book,sychev,kirchheim_lecture_notes, kirc-muller-sverak}
and has lead to a number of  new results, e.g.,  in the study of solid-solid phase-transitions, counterexamples to the regularity
of elliptic systems \cite{muller-sver03, szekelyhidi_regularity} and mathematical origami \cite{dacorogna_origami}.
The  partial differential relation \eqref{eq:pdr_gradient} corresponds (locally) to the constraint $\curl v = 0$.
In the spirit of Tartar's work \cite{tartar}  it is naturally to consider also constraints $ \Acal v = 0$ where $\Acal$ is a general
first order differential operator with constant coefficients. We deal with the case 
$\div V =0$ where $V$ is matrix-valued and the divergence is taken rowwise. The divergence constraint
has already been considered elsewhere  in the context of convex integration, e.g.,  in  
\cite{dacorogna-et-al}  in the context of general closed differential forms\footnote{The setting in \cite{dacorogna-et-al}
is both more general and more restrictive than our setting. First the authors consider the relation
$d \omega \in E$ where $\omega \in W^{1,\infty}(\Omega, \Lambda^k)$  is a general $k$-form on $\Omega \subset \R^n$ (our setting corresponds to 
$k=n-2$) and second they allow $E$ to be contained in a lower dimensional subspace. On the other 
hand their  treatment does not directly cover the case $\div V = 0$ if $V \in W^{1, \infty}(\Omega, \Mmn)$ and  $m \ge 2$.} and in  \cite{delellis-szek, delellis-szek2 } 
in the context of the Euler equations
(see also   \cite{szek3} and references therein). In Section \ref{conv-integ-section} we give a brief self-contained description of 
convex integration with the constraint $\div V=0$ since we will use exactly the same strategy for the application to the
Born-Infeld equation.


More precisely we show in Theorem \ref{mainthm} that problem \eqref{general-inclusion} 
admits a solution whenever $K$ can be ``approximated" in the sense of Definition \ref{in-approximation} 
and $F$ lies in the interior of some appropriate hull of $K$.

In Section \ref{young-meas} we specialize the results obtained in Section \ref{conv-integ-section} to the 
case of a partial differential relation arising in connection with the Born-Infeld equations. 
Let us briefly introduce the problem. 
The Born-Infeld system is a non-linear version of Maxwell's equations which can be written as
a set of partial differential constraints
\begin{align}\label{BI-system}
\partial_t D + \curl\Big(\dsp\frac{-B+D\wedge P}{h}\Big) & =
\partial_t B + \curl\Big(\dsp\frac{D+B\wedge P}{h}\Big) =0 \,,\\
\label{eq:initial_constraint}  \div D & = \div B=0\,,
\end{align} 
combined with the pointwise relation
\no

\begin{equation}\label{Ph}
P=D\wedge B,\qquad h=\sqrt{1+|B|^2 +|D|^2 +|P|^2} \,.
\end{equation}
Here $D,B, P:\Om\times[0,T]\subset\R^3\times\R^+\to\R^3$ and $h: \Om\times[0,T]\subset\R^3\times\R^+\to\R$.
Note that \eqref{BI-system} implies that $\partial_t \,  \div D = \partial_t  \, \div B = 0$. Thus if \eqref{eq:initial_constraint}
holds at time $t=0$ it holds for all times. 

\no
The relations \eqref{Ph} define a six-dimensional manifold  in  $\R^{10}$, that we call the BI-manifold and denote 
by $\Mcal$.
We refer to Brenier \cite{brenier} for the mathematical analysis and many further references on 
the Born-Infeld equations \eqref{BI-system}. Here we only give a brief account of those arguments of 
\cite{brenier} which give rise to the question addressed in this paper.
The starting point is to observe that if $(D,B)$ are smooth solutions of \eqref{BI-system} and if  $P$ and $h$
are given by \eqref{Ph} then  they
satisfy the additional conservation laws

\begin{align}\label{c.laws1}
\partial_t h + \div P & = 0 \,,\\
\label{c.laws2}  \partial_t P +\Div\Big(\frac{P\otimes P - B\otimes B- D\otimes D}{h}  \Big) & 
= \curl\Big(\frac{1}{h}\Big)\,.
\end{align}

\no
This suggests to lift the $6\times 6$ system \eqref{BI-system} 
to a $10\times 10$ system of conservation laws by adding equations \eqref{c.laws1}, \eqref{c.laws2}, regardless of 
the condition \eqref{Ph}.
More precisely one regards $P$ and $h$ as additional unknowns and considers the  
augmented system  \eqref{BI-system}, \eqref{c.laws1}, \eqref{c.laws2}. This system  enjoys remarkable 
properties which allow for an easier analysis than the original system \eqref{BI-system} 
(see \cite{brenier} for more precise details). 
Of course, among all solutions of the augmented system, only those with 
initial conditions valued in the BI-manifold genuinely correspond to the original system \eqref{BI-system}.
A natural question  is  which initial conditions can be weakly approximated 
by initial conditions valued in the BI-manifold $\Mcal$.
Brenier shows that the convex hull of the six-dimensional set $\Mcal$ contains an open
set in $\R^{10}$ and then states without proof: 'From this result, we infer that, through weak completion,
we may consider, for the ABI system, all kinds of initial condition with full dimensionality, where the
'fluid variables' $(h,P)$ are clearly distinct from the 'electromagnetic' variables.' (\cite{brenier}, p. 73). 

Here we provide a proof of the statement that all vectors $F$  in the interior of the convex
hull  of $\Mcal$ can arise as initial conditions 'through completion'. A soft version of this statement 
is that given $F\in\Int(\Mcal^c)$ there exists 
$\dsp \{V_{j}\}=\{(D_{j},B_{j},P_{j},h_{j})\}\subset L^{p}(\Om;\R^{10})$
such that 
\begin{equation}\label{weakpb1}
V_j \weak F  \quad \mbox{weakly in $L^p$}
\end{equation}
and
\begin{equation}\label{weakpb2}
\div D_{j}=\div B_{j}=0 \quad\text{and }\quad \dist(V_{j},\Mcal)\to 0 \quad\text{in measure}\,,
\end{equation}

\no
or 

\begin{equation*}
\div D_{j}\to0\,, \div B_{j}\to 0 \quad\text{strongly in }W^{-1,p'}\quad \text{and}
\quad V_{j}\in\Mcal \text{ a.e}\,.
\end{equation*}
%
%
%
%
%
%
This is proved in Section \ref{young-meas} and is essentially a consequence
of Tartar's approach, see  \cite{tartar} and \cite{fon-muller}

In Section \ref{section-A} we prove the following stronger result which
shows that there exist an approximating sequence which satisfies
both constraints $\div B_j = \div D_j=0$ and $V_j \in \Mcal$ exactly.
Since it requires almost no extra work we allow piecewise constant  functions $F$ rather than just
constant $F$. 

Here and in the following we say that a function $f: \Omega \to \R^m$ is {\it piecewise
constant} if there exist (finitely or countably many) mutually disjoint open sets $\Omega_i$ with Lipschitz boundary such that
\begin{equation}
\mbox{$f_{| \Omega_i}$ is constant} \quad \mbox{and} \quad   |\Omega \setminus \bigcup_i \Omega_i| = 0,
\end{equation}
 where $|E|$ denotes the Lebesgue measure of $E$. Similarly we say that $f$ is {\it piecewise affine} 
 if there exists $\Omega_i$ as above and 
 \begin{equation}
\mbox{$f_{| \Omega_i}$ is affine}. 
\end{equation}
The assumption that $\Omega_i$ should have Lipschitz boundary is natural for piecewise affine functions. 
It can actually be dropped by showing the perturbations we use are always in $W^{1, \infty}_0(\Omega_i)$.
For this we only need that the explicit diamond shaped set $\tilde \Omega_\e$ which is defined in the proof of
Lemma \ref{basic} has Lipschitz boundary.


%
%
%

\begin{theorem}\label{final-thm} Let $\Omega \subset \R^3$ be a bounded and  open set with Lipschitz boundary. 
Let $L$ be a compact subset of $ \Int(\Mcal^c)$, let $F \in L^\infty(\Om;\R^{10})$ and suppose
that $F$ is piecewise constant and satisfies
\begin{equation}
F(x) \in  L  \quad \mbox{a.e.}
\end{equation}
as well as 
\begin{equation}
\div D =\div B=0  \quad\text{in }\dpr(\Om).
\end{equation}
Then there exists a sequence 
$\dsp \{V_{j}\}=\{(D_{j},B_{j},P_{j},h_{j})\}\subset L^{\infty}(\Om;\R^{10})$ such that 
\begin{align*}
& \div D_{j}=\div B_{j}=0  \quad\text{in }\dpr(\Om),\\
& V_{j}\in\Mcal \quad\text{a.e.}\,,\\
& V_j \weakst F \quad\text{in }L^{\infty}-\text{weak*}\,.
\end{align*}
\end{theorem}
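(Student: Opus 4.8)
The plan is to deduce Theorem~\ref{final-thm} from the convex integration result Theorem~\ref{mainthm} by three reductions: to a bounded truncation of the Born--Infeld manifold, to constant data, and finally from a single solution of \eqref{general-inclusion} to a weakly-$\ast$ convergent sequence. First I would replace $\Mcal$ by a compact set: since $\Mcal^c$ is the convex hull of $\Mcal$ and $L\subset\Int(\Mcal^c)$ is compact, an elementary Carath\'eodory-type argument (every interior point lies in the interior of a nondegenerate simplex with vertices in $\Mcal$) together with the compactness of $L$ gives $R>0$ such that every $F_0\in L$ already lies in $\Int\big((\Mcal_0)^c\big)$, where $\Mcal_0:=\Mcal\cap\overline{B_R(0)}$ is compact (recall that $h\ge 1$ forces $\Mcal$ to be closed but unbounded, so this truncation is needed). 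Replacing $\Mcal$ by $\Mcal_0$ then automatically yields the uniform bound $\|V_j\|_{L^\infty}\le R$ demanded by the $L^\infty$-weak$\ast$ statement.

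Next I would check the hypotheses of Theorem~\ref{mainthm} for $K=\Mcal_0$ and every constant $F_0\in L$. The differential operator is $V=(D,B,P,h)\mapsto(\div D,\div B)$, with wave cone
\[
\Lambda=\{(D_0,B_0,P_0,h_0)\in\R^{10}:\ \exists\,\xi\ne 0\ \text{with}\ D_0\cdot\xi=B_0\cdot\xi=0\}.
\]
Since any two vectors in $\R^3$ span a subspace of dimension $\le 2$, a common normal $\xi$ always exists and hence $\Lambda=\R^{10}$: every direction is admissible for the elementary div-free oscillations of Lemma~\ref{basic}, and all the relevant hulls (lamination convex hull, the hull of Definition~\ref{in-approximation}) collapse to the ordinary convex hull, so the hull condition reduces to $F_0\in\Int((\Mcal_0)^c)$, which holds by the previous step. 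It then remains to produce an in-approximation of $\Mcal_0$ in the sense of Definition~\ref{in-approximation}, which I would build by iterating the laminates of Lemma~\ref{basic} from $\Mcal_0$ (legitimate since every splitting is admissible) and by using the explicit parametrization \eqref{Ph} of $\Mcal$ to control the successive hulls. Theorem~\ref{mainthm} then yields, for each constant $F_0\in L$, a solution of \eqref{general-inclusion} with $K=\Mcal_0$; since the perturbations of Lemma~\ref{basic} are compactly supported, this solution can be localized to any cube $Q$ with constant boundary value $F_0$.

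It remains to treat piecewise constant $F$ and to build the sequence. Let $\{\Omega_i\}$ be the open Lipschitz pieces on which $F\equiv F_i\in L$. Fix $i$; for each $j$, decompose $\Omega_i$ up to a null set into countably many pairwise disjoint open cubes $Q^{j}_{i,k}$ of diameter $\le 1/j$ (a Whitney decomposition, each cube subdivided to the required scale). On each $Q^{j}_{i,k}$ I put the rescaled solution of \eqref{general-inclusion} from the previous step with boundary value $F_i$, and define $V_j$ to equal this on $\Omega_i$. Then $V_j\in\Mcal_0\subset\Mcal$ a.e., $\|V_j\|_{L^\infty}\le R$, and the mean of $V_j$ over each cube equals $F_i$ (it solves \eqref{general-inclusion}, and rescaling preserves means). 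Because the perturbations are compactly supported, $V_j\equiv F_i$ near the skeleton $\bigcup_k\partial Q^{j}_{i,k}$ and near $\partial\Omega_i$, so the $(D,B)$-components have continuous normal trace across every cube face; moreover the hypothesis $\div D=\div B=0$ in $\dpr(\Om)$ for the piecewise constant field $F$ is exactly the statement that the normal components of $D,B$ match across every interface $\partial\Omega_i\cap\partial\Omega_{i'}$. Hence the pieces glue to a global $V_j$ with $\div D_j=\div B_j=0$ in $\dpr(\Om)$. Finally, since $V_j$ has mean $F_i$ on each cube of diameter $\le1/j$, the standard averaging argument (approximate any $\phi\in L^1(\Om)$ by a function constant on each cube, bound the error by $\|V_j\|_{L^\infty}\le R$, and sum over $i$ by dominated convergence) gives $V_j\weakst F$, which completes the proof.

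I expect the main obstacle to be the construction of the in-approximation of $\Mcal$ (equivalently $\Mcal_0$): showing that points of $\Int(\Mcal^c)$ are genuinely reached from inside by iterated div-free laminates. This is exactly where the explicit relations \eqref{Ph} and the fact that $\Lambda=\R^{10}$ must be exploited, and it explains why Brenier's observation that $\Int(\Mcal^c)\ne\emptyset$ is precisely the right hypothesis. The remaining delicate point is the bookkeeping that keeps the glued field \emph{exactly} divergence-free across all cube faces and interfaces $\partial\Omega_i$ --- which is the reason the compact support of the perturbations (Lemma~\ref{basic}) and the hypothesis $\div D=\div B=0$ on $F$ are needed.
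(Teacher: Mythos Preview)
Your overall strategy---reduce to constant data on small pieces, build an exact solution on each piece with the correct average, glue, and let the mesh size go to zero---is exactly the paper's. There are, however, two places where your write-up does not quite close, and in both the paper proceeds differently from what you propose.

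\medskip
\noindent\textbf{(i) Applying Theorem~\ref{mainthm} directly.} Theorem~\ref{mainthm} is stated for $K\subset\Mmn$ with the \emph{full} rowwise divergence constraint, whereas here $V\in\R^{10}$ and only the six $(D,B)$-components are constrained. Your observation $\Lambda=\R^{10}$ is precisely why the adaptation is easy, but it is an adaptation, not an application: the potential $H$ only represents $(V_D,V_B)$, so neither the inclusion $V\in K$ nor the average $\int V=F$ for the $(P,h)$-components follows from $H|_{\partial\Omega}=0$. The paper handles this in Lemma~\ref{basic-A} by defining $(V_P,V_h)$ explicitly with a correction term $(\theta-\eta)(M_P-N_P,M_h-N_h)$ that forces the right mean, and then carries this through to Lemma~\ref{lemma-km} and Theorem~\ref{thm99}. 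Without such a mechanism your mean-value argument for weak-$\ast$ convergence would not go through on the last four coordinates.

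\medskip
\noindent\textbf{(ii) The in-approximation.} You single this out as the main obstacle and propose to build it from ``the explicit parametrization \eqref{Ph}''. In fact the paper shows it is not an obstacle at all: Lemma~\ref{covering} (with the elementary Lemma~\ref{lem:finiteset}) produces an in-approximation of \emph{any} set $M\subset\R^d$ with $\Int M^c\neq\emptyset$ and any compact $L\subset\Int M^c$, using only Carath\'eodory and scaling. No structure of $\Mcal$ beyond $\Int\Mcal^c\neq\emptyset$ is used, and the truncation $\Mcal_0=\Mcal\cap\overline{B_R}$ is subsumed in condition~3 of that lemma.

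\medskip
\noindent\textbf{Minor point on gluing.} Your claim ``$V_j\equiv F_i$ near the skeleton'' is not what Lemma~\ref{basic} or Theorem~\ref{mainthm} gives: the potential vanishes on the boundary but not in a neighbourhood of it, so $V_j$ need not equal $F_i$ there. The paper glues via \eqref{eq:extension}: since $H_i^j\in W^{1,\infty}_0(\Omega_i)$, one has $\chi_{\Omega_i}\Lcal(H_i^j)=\Lcal(\chi_{\Omega_i}H_i^j)$ in $\dpr(\R^3)$, so the zero-extension of $(V_i^j-F_i)_{D,B}$ is automatically divergence-free and adding $F$ (divergence-free by hypothesis) gives $\div D_j=\div B_j=0$ globally. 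This avoids any normal-trace bookkeeping.
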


%

\no
Of course Theorem \ref{final-thm} is only useful if the convex hull of the set 
$\Mcal$ has non-empty interior. This follows from the following result of Brenier.

\begin{theorem}[\cite{brenier}, Thm. 2]     \label{convex-hull}
The convex hull $\Mcal^{c}$ satisfies
\begin{align}
\Mcal^{c}    & \supset \{
(B,D,P,h)\subset\R^{3}\times\R^{3}\times\R^{3}\times\R: h\geq 1+|D|+|B|+|P|
\}\, , \\
\Mcal^{c}  & \subset \{
(B,D,P,h)\subset\R^{3}\times\R^{3}\times\R^{3}\times\R: h\geq \sqrt{1+|D|^{2}+|B|^{2}+|P|^{2}}
\}\,.
\end{align}
\end{theorem}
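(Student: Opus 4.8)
The plan is to establish the two inclusions separately. The second (outer) one is an immediate consequence of the convexity of the Euclidean square root; the first (inner) one requires a short explicit construction and is where the actual work lies, although even there nothing beyond elementary convex combinations and one AM--GM optimisation is needed.

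\emph{Outer inclusion.} Write $v=(B,D,P)\in\R^9$. The map $v\mapsto\sqrt{1+|v|^2}$ is convex on $\R^9$: its Hessian equals $(1+|v|^2)^{-3/2}\big[(1+|v|^2)I-v\otimes v\big]$, and $(1+|v|^2)|w|^2-(v\cdot w)^2\ge(1+|v|^2)|w|^2-|v|^2|w|^2=|w|^2\ge0$ for every $w$, so this Hessian is positive semidefinite. Hence $S:=\{(B,D,P,h):h\ge\sqrt{1+|B|^2+|D|^2+|P|^2}\}$ is the epigraph of a convex function, and in particular closed and convex. By the definition of $\Mcal$ every point of $\Mcal$ satisfies $h=\sqrt{1+|B|^2+|D|^2+|P|^2}$, so $\Mcal\subset S$; since $S$ is convex (hence contains $\co\Mcal$, the smallest convex set containing $\Mcal$) we get $\Mcal^c=\co\Mcal\subset S$, which is the second inclusion.

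\emph{Inner inclusion.} The idea is to produce, for each target $(B,D,P)$, enough points of $\Mcal$ so that convex combinations of them reach $(B,D,P,h)$ for every $h\ge1+|B|+|D|+|P|$, exploiting a symmetrisation. First, if $(B,D,D\wedge B,h_0)\in\Mcal$ then also $(B,-D,-(D\wedge B),h_0)\in\Mcal$ (flipping $D$ flips $P=D\wedge B$ and leaves $|B|,|D|,|D\wedge B|$, hence $h_0$, unchanged), and their midpoint $(B,0,0,h_0)$ lies in $\Mcal^c$; as $D$ ranges over $\R^3$ the continuous quantity $h_0=\sqrt{1+|B|^2+|D|^2+|D\wedge B|^2}$ runs through the whole interval $[\sqrt{1+|B|^2},\infty)\supset[1+|B|,\infty)$, so
\[
\{(B,0,0,h):h\ge1+|B|\}\subset\Mcal^c ,
\]
and likewise with the roles of $B$ and $D$ interchanged. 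Second, for a ``pure $P$'' slice fix an orthonormal pair $e_1,e_2$ and set $e_3=e_1\wedge e_2$; for $s,t>0$ both $(se_1,te_2,-st\,e_3,\sqrt{(1+s^2)(1+t^2)})$ and $(-se_1,-te_2,-st\,e_3,\sqrt{(1+s^2)(1+t^2)})$ lie in $\Mcal$, so their midpoint $(0,0,-st\,e_3,\sqrt{(1+s^2)(1+t^2)})$ lies in $\Mcal^c$. Minimising $(1+s^2)(1+t^2)=1+s^2+t^2+s^2t^2$ subject to $st=p$ gives, by AM--GM, the value $(1+p)^2$ attained at $s=t=\sqrt p$, while letting $s/t\to\infty$ along $st=p$ sends $\sqrt{(1+s^2)(1+t^2)}$ to $\infty$, so along that curve the quantity takes every value in $[1+p,\infty)$. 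After a rotation this yields
\[
\{(0,0,P,h):h\ge1+|P|\}\subset\Mcal^c ,
\]
and $\{(0,0,0,h):h\ge1\}\subset\Mcal^c$ is the special case $B=0$ of the first slice.

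To conclude, fix a target $(B,D,P)$ and $h\ge1+|B|+|D|+|P|$; if all three of $B,D,P$ vanish then $(0,0,0,h)\in\Mcal^c$ directly by the last slice, so assume not, discard those that vanish, and for the rest choose weights $\lambda_i>0$ summing to $1$. Writing
\[
(B,D,P,h)=\lambda_1\Big(\tfrac{B}{\lambda_1},0,0,a_1\Big)+\lambda_2\Big(0,\tfrac{D}{\lambda_2},0,a_2\Big)+\lambda_3\Big(0,0,\tfrac{P}{\lambda_3},a_3\Big),
\]
with $a_i\ge0$ chosen so that $\lambda_1a_1+\lambda_2a_2+\lambda_3a_3=h$, the $i$-th summand lies in $\Mcal^c$ as soon as $a_1\ge1+|B|/\lambda_1$, $a_2\ge1+|D|/\lambda_2$, $a_3\ge1+|P|/\lambda_3$ by the three slice inclusions; and such $a_i$ exist because the smallest admissible value of $\lambda_1a_1+\lambda_2a_2+\lambda_3a_3$ is $(\lambda_1+|B|)+(\lambda_2+|D|)+(\lambda_3+|P|)=1+|B|+|D|+|P|\le h$, the rest being absorbed by increasing the $a_i$. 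Hence $(B,D,P,h)\in\Mcal^c$, which is the first inclusion. I expect the only mild obstacle to be organising the ``pure $P$'' construction — in particular the optimisation $\min\{(1+s^2)(1+t^2):st=p\}=(1+p)^2$ and the observation that this slice is realised for \emph{every} height above the threshold, not merely at it; note that all points used are finite convex combinations of points of $\Mcal$, so the statement holds for the convex hull itself, and neither the divergence constraint nor convex integration plays any role here — Theorem \ref{convex-hull} is a purely finite-dimensional fact.
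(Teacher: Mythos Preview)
Your proof is correct. The outer inclusion is identical to the paper's (convexity of the square root), and for the inner inclusion both you and the paper reduce to the ``single--nonzero--vector'' cases via symmetric pairs $(\pm D,\ldots)$ or $(\pm B,\ldots)$ in $\Mcal$, so the core constructions are the same.

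The organisational device, however, is different. The paper fixes the height $h=s$, observes that the cross--section $B_s=\{(B,D,P,s):1+|B|+|D|+|P|\le s\}$ is a compact convex set, identifies its extreme points as those with exactly one of $B,D,P$ nonzero and $1+|B|+|D|+|P|=s$, and then shows each such extreme point is the midpoint of two points of $\Mcal$ at the \emph{same} height $s$. Krein--Milman then gives $B_s\subset\Mcal^c$ at once. You instead first show that each full ray $\{(B,0,0,h):h\ge1+|B|\}$ (and similarly for $D$ and $P$) lies in $\Mcal^c$, and then write a general point as a convex combination of three points, one on each ray. This forces you to argue that \emph{every} height above the threshold is attained along each ray --- the IVT argument for the $B$ and $D$ rays, and the AM--GM optimisation plus the $s/t\to\infty$ limit for the $P$ ray. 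The paper avoids this because the extreme--point reduction pins the height from the start; in particular your optimal choice $s=t=\sqrt{|P|}$ is exactly the paper's construction $D=\sqrt{s-1}\,d$, $B=\sqrt{s-1}\,b$ for the $P$--extreme point. Your route is slightly more hands--on but equally short, and it has the minor advantage of never invoking the characterisation of extreme points.
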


\begin{proof} To keep this paper self-contained, we provide a short proof for the convenience of the reader.
The second inclusion is clear since the function
\begin{equation*}
f(B,D,P,h) := \sqrt{1+|D|^{2}+|B|^{2}+|P|^{2}} -h
\end{equation*}
is convex and vanishes in $\Mcal$. 

To prove the first inclusion it suffices to show that for every $s > 0$ the set
\begin{equation*}
B_s := \{(B,D,P,s)\subset\R^{3}\times\R^{3}\times\R^{3}\times\R: s\geq 1+|D|+|B|+|P|
\}
\end{equation*}
is contained in $\Mcal^c$. Now $B_s = \emptyset$ if $s < 1$ and $B_1 = \{1\} \times \{(0,0,0)\} \subset \Mcal$. 
For $s>1$ the set $B_s$ is convex and compact. 
We claim that its extreme points are given by
\begin{align*}
\mbox{Ext}\,(B_s) = \{ & (s,D,B,P) : 1 + |D| + |B| + |P| = s,  \\
& \mbox{only one of the vectors $D$, $B$, $P$ is non-zero} \}.
\end{align*}
Let $(D,B,P,s)$ be an extreme point of $B_s$.  Then $1+|D|+|B|+|P| = s$. 
Assume  $D \ne 0$ and $B \ne 0$. 
Then $(D + t D/|D|, B - t B/|B|, P,s) \in B_s$ for $|t| < \min(|B|, |D|)$ and thus $(D,B,P,s)$ is not an extreme point of $B_s$. 
Similarly one shows that  no other two vectors can be simultaneously non-zero.

Since $\mbox{Ext}\,(B_s)$ is compact we have $B_s = (\mbox{Ext}\,(B_s))^c$. It thus suffices to show that
$\mbox{Ext}\,(B_s) \subset \Mcal^c$. Consider a point of the form $Y = (D,0,0,s)$ with $|D| = s-1$. 
This point is a convex combination of $X_{\pm} := (D, \pm \alpha D, 0, s)$. We have $X_{\pm} \in \Mcal$
if and only if $1 + (1 + \alpha^2) |D|^2 = s^2$ and such an $\alpha$ exists since $1 + |D|^2 = s^2 - 2(s-1) < s^2$. 
Thus $Y\in \Mcal^c$. In the same way one shows that $(0, B,0, s) \in \Mcal^c$ if $|B| = s-1$. 
Finally consider $Y = (0,0,P,s)$ with $|P| = s-1$. There exist $d, b \in \R^3$ such that $(d,b,P/|P|)$ is a positively
oriented orthonormal basis. Let $D = \sqrt{s-1} \, d$, $B=\sqrt{s-1} \,  b  $. Then $d \wedge b = (s-1) p = P$
and $1 + |D|^2 + |B|^2 + |P|^2 = 1 + (s-1) + (s-1)  + (s-1)^2 =  s^2$. Hence $(D,B,P,s) \in \Mcal$ and
similarly $(-D,-B,P,s) \in \Mcal$. It follows that $(0,0,P,s) \in \Mcal^c$. 
\end{proof}

Serre  \cite{Serre} has shown  the sharper upper bound 
\begin{align}    
\Mcal^{c}  \subset \{
& (B,D,P,h)\subset\R^{3}\times\R^{3}\times\R^{3}\times (0, \infty) :  \nonumber \\
&  h^2 \geq 1+|D|^{2}+|B|^{2}+|P|^{2} +2|P-D\wedge B|
\} .
\end{align}
Very recently \cite{Serre2} he has improved the upper bound to 
\begin{align}    
\label{eq:serre_improved}
\Mcal^{c}  \subset \{
&
(B,D,P,h)\subset\R^{3}\times\R^{3}\times\R^{3}\times (0, \infty):   \nonumber \\
& h^2\geq 1+|D|^{2}+|B|^{2}+|P|^{2} +2\sqrt{|P-D\wedge B|^2
+ |P \cdot D|^2 + |P \cdot B|^2}
\} .
\end{align}
It seems natural to conjecture that equality holds in the last relation but this seems to be open.
The precise form of $\Mcal^c$ is not important for our argument.




\section{notation}

For a matrix $A=(A_{ij})\in\Mmn$ we denote by $A^{i}$ the $i$th column of $A$, and 
by $A_{i}$ the $i$th row of $A$.
We say that a matrix field $V\in L^{1}(\Om;\Mmn)$ is divergence free, 
and we write  $\Div V=0$ in $\dpr(\Om;\R^{m})$, if each row of the matrix field $V$ is 
divergence free in the distributional sense. 
We denote by $\Mcal$ the six-dimensional manifold in $\R^{10}$ defined as

\begin{equation}\label{BI-manifold}
\Mcal :=\{(D,B,P,h)\subset\R^{3}\times\R^{3}\times\R^{3}\times\R:\,
P=D\wedge B, h=\sqrt{1+|D|^{2}+|B|^{2}+|P|^{2}}\,\}\,,
\end{equation}
\no
and by $\Mcal^c$ its convex hull. For the topological interior of $\Mcal$ we 
write $\Int(\Mcal)$.
In Section \ref{section-A} we use the identification 
$\R^{10}\simeq\R^{3}_{D}\times\R^{3}_{B}\times\R^{3}_{P}\times\R_{h}$, and 
for any $M=(M_{1},\dots,M_{10})\in\R^{10}$, we write

\begin{equation*}
M=(M_{D},M_{B},M_{P},M_{h})\in
\R^{3}_{D}\times\R^{3}_{B}\times\R^{3}_{P}\times\R_{h}\,.
\end{equation*}

As usual $W^{1,\infty}(\Omega)$ denotes the Sobolev space of $L^\infty$ functions whose
distributional derivative are  in $L^\infty$. By $W^{1,\infty}_0(\Omega)$ we denote the subspace
of functions $f$ such that there exist $f_k \in C_c^\infty(\Omega)$ with
$(f_k , Df_k) \to (f, Df)$ a.e. and $\sup_k  \|f_k\|_{W^{1,\infty}} < \infty$.  If $\Omega$ is a bounded open set with Lipschitz
boundary then $W^{1, \infty}(\Omega)$ agrees with the space of functions which
have a Lipschitz continuous extension to $\bar \Omega$ and the  subspace $W^{1,\infty}_0(\Omega)$
consists exactly of Lipschitz functions with  $f_{| \partial \Omega} = 0$. 

If $f$ is a function on $E \subset \R^n$ we denote by $f \chi_E$ the extension of $f$ by zero to $\R^n$.  If $f \in W^{1, \infty}_0(\Omega)$ then approximation of $f$ by $f_k \in C_c^\infty(\Omega)$ shows that
\begin{equation}  \label{eq:extension}
f \chi_E  \in W^{1,\infty}(\R^n)  \quad \mbox{and}  \quad D (f \chi_E) = (Df) \chi_E  \mbox{   in  } \dpr(\R^n).
\end{equation}

\section{Convex integration for solenoidal fields}\label{conv-integ-section}

As already remarked in the introduction, extensions of the convex integration method 
to the div-free case are known. However, for the reader's convenience and because 
of certain modifications of the existing approaches, we present a self-contained program based on the notion of in-approximation. 
We will essentially follow  \cite{muller-sverak}. 


We will work with potentials of divergence free fields. Therefore we introduce 
the differential operator 
$\dsp
\Lcal : \big(W^{1,\infty}(\Om;\Mnn)\big)^m\to L^\infty(\Om;\Mmn)\,,
$
defined as

\begin{equation*}
\big(\Lcal (G)\big)_{kj} := \sum_{i=1}^{n} \frac{\partial G_{ij}^{k}}{\partial x_i}\,, 
\quad\,
1\leq k\leq m,\,  1\leq j\leq n\,, \quad G=(G^1,\dots,G^m)\,.
\end{equation*}

\begin{lemma}

Let $G^k\in W^{1,\infty}(\Om;\Mnn)$ be matrix fields for $1\leq k\leq m$,  such that 
the tensor $G^k$ is skew symmetric for every $k$, i.e., $\dsp G_{ij}^k=-G_{ji}^k$.  
Then the matrix field $\Lcal(G)$
is divergence free.
\end{lemma}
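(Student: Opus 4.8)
The plan is to verify the solenoidal condition rowwise, working directly from the definition of $\Lcal$ and the meaning of ``divergence free'' in $\dpr$. Fix $1\le k\le m$. By definition the $k$-th row of $\Lcal(G)$ is the vector field $w^k\in L^\infty(\Om;\R^n)$ with components $w^k_j=\sum_{i=1}^n\partial_i G^k_{ij}$ (these lie in $L^\infty(\Om)$ because $G^k\in W^{1,\infty}(\Om;\Mnn)$), and $\Div\Lcal(G)=0$ in $\dpr(\Om;\R^m)$ means $\div w^k=0$ in $\dpr(\Om)$ for every $k$, i.e.
\begin{equation*}
\sum_{j=1}^n\int_\Om w^k_j\,\partial_j\var\,dx
=-\sum_{i,j=1}^n\int_\Om G^k_{ij}\,\partial_i\partial_j\var\,dx=0
\end{equation*}
for every $\var\in C_c^\infty(\Om)$. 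Here I have integrated by parts so that both differentiations act on the smooth test function $\var$; this is the identity I need to establish, and it requires no second derivative of $G^k$.

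The remaining step is purely algebraic. Since $\var$ is smooth, $\partial_i\partial_j\var=\partial_j\partial_i\var$, so relabelling the summation indices $i\leftrightarrow j$ gives $\sum_{i,j}\int_\Om G^k_{ij}\,\partial_i\partial_j\var\,dx=\sum_{i,j}\int_\Om G^k_{ji}\,\partial_i\partial_j\var\,dx$. On the other hand the skew-symmetry hypothesis $G^k_{ij}=-G^k_{ji}$ yields $\sum_{i,j}\int_\Om G^k_{ij}\,\partial_i\partial_j\var\,dx=-\sum_{i,j}\int_\Om G^k_{ji}\,\partial_i\partial_j\var\,dx$. Comparing the two identities forces $\sum_{i,j}\int_\Om G^k_{ij}\,\partial_i\partial_j\var\,dx=0$, which is exactly what was needed; as $k\in\{1,\dots,m\}$ was arbitrary, $\Lcal(G)$ is divergence free.

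There is essentially no obstacle here: the lemma is the coordinate form of the elementary fact that the divergence of the ``curl-type'' potential $\Lcal(G)$ built from skew tensors vanishes (for $m=1$, $n=3$ one has $G_{ij}=\sum_l\varepsilon_{ijl}a_l$ and $w=-\curl a$, so $\div w=-\div\curl a=0$). The only point deserving a word is the low regularity of $G$, and that is handled automatically by the formulation above, in which the two differentiations always fall on $\var\in C_c^\infty(\Om)$ rather than on $G$, so the computation stays entirely within the distributional framework in which the conclusion $\Div\Lcal(G)=0$ is stated. If one prefers, one may instead invoke the commutativity of the second distributional derivatives of $G^k_{ij}$ directly; the outcome is the same.
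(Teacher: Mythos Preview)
Your proof is correct. The paper itself states this lemma without proof, treating it as an elementary observation (and immediately follows it with a remark on the $n=3$ case); your argument is precisely the standard verification and there is nothing to add.
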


\begin{remark}
{\rm
For $n=3$, the space of skew symmetric $3 \times 3$ matrices $\Mtts$ can be identified with
$\R^3$ and the operator $\Lcal$ can alternatively be written as the rowwise curl of a $k \times 3$ matrix. 
We will, however, not use this fact. 
}
\end{remark}

The next result provides the basic construction that allows one to define a divergence free field   
whose values lie in a small neighborhood of two values, and whose potential can be chosen to be 
zero on the boundary. 

\begin{lemma}\label{basic}
Let $A,B\in\Mmn$ and let $F:=\t A+(1-\t)B$ for some $\t\in(0,1)$. Assume that $\rank(A-B)\leq n-1$.  
Then for each $\delta>0$, there exists $V\in L^{\infty}(\Om;\Mmn)$ such that 

\begin{align*}
& V=\Lcal (G) +F \text{ with } G\in \big(W^{1,\infty}(\Om;\Mnns)\big)^m \text { and piecewise linear}\,,\\
& \|G\|_{L^{\infty}(\Om)}<\delta   \,,\\
& G|_{\partial \Om}=0\,,\\
& \dist(V,\{A,B\})<\delta \,.
\end{align*}
\end{lemma}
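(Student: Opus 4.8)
The plan is to reduce to the one-dimensional "laminate" building block, using that $\rank(A-B) \le n-1$ lets us find a direction along which $A-B$ is compatible. First I would write $A - B = a \otimes \nu$ is \emph{not} quite available (that would need rank one), so instead I use the general fact: if $\rank(A-B) \le n-1$ there exists a unit vector $\nu \in \R^n$ with $(A-B)\nu = 0$, i.e. every row of $A-B$ is orthogonal to $\nu$. Choosing coordinates so that $\nu = e_n$, the matrices $A$ and $B$ differ only in their first $n-1$ columns, and a function that oscillates only in the $x_n$ direction has a divergence-free structure we can realize through $\Lcal$ of a skew field. Concretely, let $\chi$ be the $1$-periodic function on $\R$ equal to $1-\t$ on $(0,\t)$ and to $-\t$ on $(\t,1)$ (mean zero), and let $\psi$ be its antiderivative with mean zero, so $\|\psi\|_\infty \le 1$ and $\psi$ is piecewise linear, Lipschitz, periodic. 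Set $w(x) := \lambda^{-1}\psi(\lambda x_n)$ for large $\lambda$; then $w$ is small in $L^\infty$, piecewise linear in $x_n$, and $\partial_{x_n} w(x) = \chi(\lambda x_n) \in \{1-\t, -\t\}$.

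Next I would build $G$ from $w$ times a fixed algebraic template. For each row index $k$, the $k$th row of $A - B$ is a vector $c_k \in \R^n$ with $c_k \cdot e_n = 0$; I want a skew-symmetric $G^k(x) \in \Mnns$ whose associated row of $\Lcal(G)$ equals $\chi(\lambda x_n)\, c_k$ (so that $V = \Lcal(G) + F$ has $k$th row $F_k + \chi(\lambda x_n) c_k$, which equals $A_k$ on the set where $\chi = 1-\t$ and $B_k$ where $\chi = -\t$). Recalling $(\Lcal(G))_{kj} = \sum_i \partial_{x_i} G^k_{ij}$, it suffices to take $G^k_{ij}(x) = w(x)\,(c_k)_i\,\delta_{jn} - w(x)\,(c_k)_j\,\delta_{in}$, which is manifestly skew in $(i,j)$; then $(\Lcal(G))_{kj} = \partial_{x_n} w \,(c_k)_j - \delta_{jn}\sum_i \partial_{x_i} w\,(c_k)_i$, and since $w$ depends only on $x_n$ and $(c_k)_n = 0$ the second term vanishes, giving exactly $\chi(\lambda x_n)(c_k)_j$ as desired. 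This $G$ is piecewise linear, Lipschitz, with $\|G\|_\infty \le C\lambda^{-1}\max_k|c_k| < \delta$ for $\lambda$ large, and by construction $\dist(V,\{A,B\}) = 0$ on this "interior" construction.

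The genuine obstacle is the boundary condition $G|_{\partial\Om} = 0$, since the plane-wave $G$ above does not vanish on $\partial\Om$. The standard fix, which I would carry out following \cite{muller-sverak}, is a cutoff: on the explicit diamond-shaped domain $\tilde\Om_\e$ (the convex hull of a fixed small simplex scaled by $\e$, chosen so it has Lipschitz boundary) one can multiply the plane-wave by a Lipschitz cutoff $\eta$ equal to $1$ on a slightly smaller set and vanishing near $\partial\tilde\Om_\e$; the product $\eta G$ then lies in $(W^{1,\infty}_0(\tilde\Om_\e;\Mnns))^m$, but $\Lcal(\eta G)$ picks up an error term $\Lcal(\eta G) - \eta\Lcal(G)$ supported in the thin transition layer $\{0 < \eta < 1\}$, which has small measure. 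One then covers a fixed fraction of $\Om$ by rescaled disjoint copies of $\tilde\Om_\e$ (Vitali covering), uses the plane-wave-plus-cutoff on each copy extended by zero (legitimate by \eqref{eq:extension}), and checks that the total measure of the bad set — the transition layers plus the uncovered remainder — can be made smaller than any prescribed amount; a diagonal/iteration argument over successively finer coverings then drives $\dist(V,\{A,B\})$ below $\delta$ on all of $\Om$ while keeping $\|G\|_\infty < \delta$ and $G|_{\partial\Om} = 0$. The only point requiring care is that the error from the cutoff stays within $\delta$ in the $L^\infty$ distance estimate on the good set and merely in measure (not uniformly) on the bad set; since the statement only asks for $\dist(V,\{A,B\}) < \delta$ a.e. after finitely many steps, this is exactly what the construction delivers.
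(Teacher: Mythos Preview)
Your reduction to $(A-B)e_n = 0$ and your algebraic template for $G$ (skew fields built from the sawtooth $w(x_n)$ times the rows $c_k$ of $A-B$) are correct and match the paper's setup. The gap is in your treatment of the boundary condition. Multiplying the plane-wave by a Lipschitz cutoff $\eta$ has two problems. First, if $\eta$ and $G_0$ are each piecewise linear then $\eta G_0$ is piecewise quadratic, so the piecewise-linearity requirement on $G$ fails---and this matters downstream, since Lemma~\ref{lemma-k-open} needs $\Lcal(G)+F$ to be piecewise \emph{constant} in order to iterate. Second, and more seriously, on the transition layer $\{0<\eta<1\}$ one has $V = \eta\,\Lcal(G_0) + (\nabla\eta)\!\cdot\!G_0 + F$; even if the $(\nabla\eta)\!\cdot\!G_0$ term is made small by taking $\lambda$ large, the leading part $\eta\,\Lcal(G_0)+F$ lies on the \emph{segment} $[A,B]$ at distance roughly $\min(\eta,1-\eta)\,|A-B|$ from the endpoints. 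So $\dist(V,\{A,B\})<\delta$ fails on a set of positive measure, and your ``diagonal/iteration'' does not fix this: on that layer $V$ is not constant, so you cannot simply re-run the two-value building block there; reducing the layer's measure does not improve the pointwise estimate.

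The paper avoids both issues by an \emph{additive} piecewise-linear perturbation rather than a multiplicative cutoff. Starting from the single-period sawtooth potential $P$ on a thin slab $(-1,1)^{n-1}\times(0,\e)$, it sets $\tilde P^k_n := P^k_n + Q^k_n$ with
\[
Q^k_n(x) = -\,\e\,\theta(1-\theta)\,\big(|x_1|+\cdots+|x_{n-1}|\big)\,(A_k-B_k),
\]
and takes the building block to be the diamond $\tilde\Omega_\e := \{x:\langle\tilde P^k_n, A_k-B_k\rangle>0\}$. Since $\tilde P^k_n$ is proportional to $A_k-B_k$, it vanishes exactly on $\partial\tilde\Omega_\e$; since $Q$ is independent of $x_n$ and $(A_k-B_k)_n=0$, one computes $|\Lcal(Q)|\le C\,\e\,\theta(1-\theta)\,|A-B|$, so $\dist(\Lcal(\tilde P)+F,\{A,B\})<\delta$ holds \emph{uniformly} on $\tilde\Omega_\e$ for $\e$ small. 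A single Vitali covering of $\Omega$ by rescaled copies of $\tilde\Omega_\e$ then gives the result directly---no iteration needed, and $G$ stays piecewise linear.
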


\begin{proof}
Without loss of generality we may assume that $(A-B)e_{n}=0$, and $F=0$, 
so that we can write $A=(1-\t)(A-B)$ and $B=-\t(A-B)$. If not, we can replace $A$ and $B$ 
by $A-F$ and $B-F$ respectively.
We first construct a solution for a special domain $\Om_{\e}$ and then we will complete the proof by an application of the Vitali covering theorem. 
Let $\Om_{\e}:=(-1,1)^{n-1}\times (0,\e)$ and let 
$\chi :\Om_{\e}\to\{0,1\}$ be the characteristic function of the set 
$(-1,1)^{n-1}\times (0,\e\t)$: 

\begin{equation*}
\chi(x) = 
\begin{cases}
1    & \text{ if }\, 0\leq x_{n}\leq \e\t  \,,\\
0    & \text{ if }\, \e\t< x_{n}\leq \e\,. 
\end{cases}
\end{equation*} 

\no
We then define $U:=\chi A + (1-\chi) B$ and remark that $U$ is divergence free, since 
$(A-B)e_{n}=0$. 
We seek a potential $P$ of $U$. For each $k=1,\dots ,m$, and $j=1\dots,n$, let 

\begin{align*}
& P^k_{nj}(x) = 
\begin{cases}
A_{kj} x_{n}    & \text{ if }\,0\leq x_{n}\leq \e\t  \,,\\
B_{kj} (x_{n}-\e\t)+\e\t A_{kj}    & \text{ if }\,\e\t< x_{n}\leq \e\,,
\end{cases}\\
& P^k_{jn}=-P^k_{nj}  \,, \\
& P^k_{ij}=0 \:\text{ otherwise}.
\end{align*}

\no
It is readily seen that $\dsp U=\Lcal(P)$.
Moreover 
$P$ is piecewise linear and $P=0$ at $x_{n}=0$ and $x_{n}=\e$, but $P$ does not vanish on the whole 
boundary of $\Om_{\e}$. 
In order to find the sought function $G$, we first    
remark that, for each $k=1,\dots,m$, the function $P^k_n$ is proportional to $A_k-B_k$ and compute 
$\langle P^k_n,A_k-B_k\rangle$:

\begin{equation*}
\langle P^k_n,A_k-B_k\rangle = 
\begin{cases}
|A_k-B_k|^{2}(1-\t)x_{n}    & \text{ if }\,0\leq x_{n}\leq \e\t  \,,\\
|A_k-B_k|^{2}\t(\e -x_{n})   & \text{ if }\,\e\t< x_{n}\leq \e\,.
\end{cases}
\end{equation*} 

\no
Note that $\dsp \langle P^k_n,A_k-B_k\rangle \geq 0$ in $\Om_{\e}$. 
For each $k=1,\dots,m$, we introduce  the function 

$$
Q^k_n(x):=-\e\t(1-\t)(|x_{1}|+\dots +|x_{n-1}|)(A_k-B_k)
$$ 

\no
and set 
\begin{equation}\label{perturbedP}
\tP^k_n:=P^k_n+Q^k_n\,.
\end{equation}

\no 
The function $\tP^k_n$ is piecewise linear and satisfies  
$\dsp\langle \tP_n^k,A_k-B_k\rangle\leq 0$ on $\partial\Om_{\e}$. 
On the other hand $\dsp\langle \tP^k_n,A_k-B_k\rangle> 0$ in a 
neighborhood of the segment $\{x\in\Om_{\e}:x_{1}=\dots =x_{n-1}=0\}$. 
Set 

$$
\widetilde\Om_{\e}:=\{x\in\Om_{\e}: \langle \tP^k_n,A_k-B_k\rangle> 0\}\,,
$$

\no 
and define $\dsp \widetilde{U}:=\Lcal(\tP)$, where $\tP\in \big(W^{1,\infty}(\Om;\Mnn)\big)^m$ is defined 
by \eqref{perturbedP} and 

\begin{align*}
& \tP^k_{jn}=-\tP^k_{nj}  \,, \\
& \tP^k_{ij}=0 \:\text{ otherwise}.
\end{align*}

\no
Then 

\begin{align*}
& \tP\in \big(W^{1,\infty}(\widetilde\Om_{\e};\Mnn)\big)^m \text { is piecewise linear}\,,\\
& \tP|_{\partial \widetilde\Om_{\e}}=0\,,\\
& \|\tP\|_{L^{\infty}(\widetilde\Om_{\e})}< \e\t(1-\t)|A-B|\,,\\
& \dist(\widetilde{U},\{A,B\})< \e\t(1-\t)|A-B|\,.
\end{align*}

\no
By the Vitali covering theorem one can exhaust $\Om$ by disjoint scaled copies 
of $\widetilde\Om_{\e}$. More precisely, there exist $r_{i}\in (0,1)$ and 
$x_{i}\in \Om$ such that the sets 
$\widetilde\Om_{\e}^{i}:=x_{i}+r_{i}\widetilde\Om_{\e}$ are 
mutually disjoint, compactly contained in $\Omega$ and meas$(\Om\setminus\cup\widetilde\Om_{\e}^{i})=0$.
Then we define 

\begin{equation*}
G(x):= 
\begin{cases}
\dsp r_{i}\tP\big(r_{i}^{-1}(x-x_{i})\big)    & \text{ if } x\in \widetilde\Om_{\e}^{i} \,,\\
\dsp 0   & \text{ elsewhere }\,.
\end{cases}
\end{equation*} 
It follows from \eqref{eq:extension} that $G \in W^{1, \infty}(\Omega)$ and we have $G=0$ on $\partial \Omega$. 
We set $V:=\Lcal (G)$. By choosing $\e$ sufficiently small, it can be easily checked that $V$ satisfies all the required properties.

\end{proof}

Next we study the problem of finding a divergence free field taking values in an open set $K$ 
and with a prescribed average $F$. 
From Lemma \ref{basic} we know that such problem can be solved 
provided that $F=\t A+(1-\t)B$ for some $\t\in(0,1)$ and $A,B\in K$, with $\rank(A-B)\leq n-1$. 
We will see that this procedure can be iterated. More precisely, if $\rank(F-F')\leq n-1$, and 
$F'=\t' A'+(1-\t')B'$ for some $\t'\in(0,1)$ and $A',B'\in K$, with $\rank(A'-B')\leq n-1$, than the 
above problem can be solved also for $\mu F+(1-\mu)F'$ for all $\mu\in(0,1)$. 
This motivates the following definition.

\begin{definition}{\rm
We say that $K\subset\Mmn$ is stable under lamination (or {\em lamination convex}) 
if for all $A,B\in K$ such that $\rank(A-B)\leq n-1$, and all $\theta\in(0,1)$, one has
$\theta A +(1-\theta)B\in K$. 
The {\em lamination convex hull} $K^{L}$ is defined as the smallest lamination 
convex set that contains $K$.
}
\end{definition}

\begin{remark}\label{rem-K^{L}}
{\rm
It can be easily checked that the lamination convex hull $K^{L}$ is obtained by 
successively adding rank-$(n-1)$ segments, {\em i.e.}, 

\begin{equation*}
K^{L}=\bigcup_{i}K^{i}\,,
\end{equation*}

\no
where $K^{0}=K$ and 
$$ 
K^{i}:=K^{i-1}\cup\{C: \exists A,B\in K^{i-1}, \theta\in(0,1) \text{ \rm such that }
C=\theta A +(1-\theta)B, \rank(A-B)\leq n-1 \}\,.
$$
Moreover, if $K$ is open, than all the sets $K^{i}$ are open.
}
\end{remark}

\begin{lemma}\label{lemma-k-open}
Suppose that $K\subset\Mmn$ is open and bounded and that 
$F\in L^{\infty}(\Om;\Mmn)$  is  a piecewise constant function which satisfies 

\begin{align*}
& \Div F=0 \text { in }\dpr(\Om;\R^{m})\,,\\
& F\in K^{L} \text{ \rm a.e. }
\end{align*}

\no 
Then, for each $\delta>0$, there exists  $\Vd\in L^{\infty}(\Om;\Mmn)$ such that 
\begin{align*}
& \Vd=\Lcal (\Gd) +F \text{ with } \Gd\in \big(W^{1,\infty}(\Om;\Mnn)\big)^m
\text { and piecewise linear}\,,\\
& \Vd\in K \quad {\rm a.e.}\,, \\
& \|\Gd\|_{L^{\infty}(\Om)}<\delta   \,,\\
& \Gd|_{\partial \Om}=0\,.
\end{align*}
\end{lemma}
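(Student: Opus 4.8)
The plan is to induct on the lamination level. Recall that the lamination convex hull is the increasing union $K^L = \bigcup_{i \ge 0} K^i$, where $K^0 = K$ and $K^i$ is obtained from $K^{i-1}$ by adjoining all rank-$(n-1)$ convex combinations, i.e. all $\t A + (1-\t)B$ with $A, B \in K^{i-1}$, $\rank(A-B) \le n-1$, $\t \in (0,1)$, and that each $K^i$ is open when $K$ is. I would prove, by induction on $i \ge 0$, the statement $(\star_i)$: \emph{for every bounded open $\Om' \subset \R^n$, every piecewise constant $F' \in L^\infty(\Om';\Mmn)$ with $F' \in K^i$ a.e., and every $\delta > 0$, there is a piecewise linear $G \in (W^{1,\infty}(\Om';\Mnns))^m$ with $G|_{\partial\Om'} = 0$, $\|G\|_{L^\infty(\Om')} < \delta$ and $\Lcal(G) + F' \in K$ a.e.} The case $i = 0$ is immediate, since then $F' \in K$ a.e. already and $G \equiv 0$ works. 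Granting $(\star_i)$ for all $i$, the lemma follows by splitting $F$ over the pieces $\Om_j$ on which it is constant, with $F \equiv c_j \in K^L$ and hence $c_j \in K^{i_j}$ for some finite $i_j$; one applies $(\star_{i_j})$ on $\Om_j$ with parameter $\delta/2$ and assembles the resulting perturbations — which vanish on every $\partial\Om_j$ — into a single $G_\delta$ via \eqref{eq:extension}. This gives $G_\delta \in (W^{1,\infty}(\Om;\Mnns))^m$, piecewise linear, with $G_\delta|_{\partial\Om}=0$, $\|G_\delta\|_{L^\infty}<\delta$ and $V_\delta := \Lcal(G_\delta) + F \in K$ a.e.; since $G_\delta$ is skew-valued, $\Lcal(G_\delta)$ is divergence free, so $V_\delta$ is divergence free as well (using $\Div F = 0$).

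For the inductive step of $(\star_i)$, $i \ge 1$, it suffices — again by assembling perturbations that vanish on the boundary of each piece — to treat a single piece, i.e. to assume $F' \equiv c$ with $c \in K^i$. If $c \in K^{i-1}$ we simply invoke $(\star_{i-1})$. Otherwise we write $c = \t A + (1-\t)B$ with $A, B \in K^{i-1}$, $\t \in (0,1)$, $\rank(A-B) \le n-1$, and apply Lemma~\ref{basic} to $A$, $B$ and $F = c$ with the parameter
\[
\delta_1 := \tfrac12\min\bigl(\delta,\ \dist(A,\Mmn\setminus K^{i-1}),\ \dist(B,\Mmn\setminus K^{i-1})\bigr),
\]
which is positive because $K^{i-1}$ is open. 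Lemma~\ref{basic} yields $V^{(1)} = \Lcal(G^{(1)}) + c$ with $G^{(1)}$ piecewise linear and skew-valued, $G^{(1)}|_{\partial\Om'}=0$, $\|G^{(1)}\|_{L^\infty} < \delta_1 \le \delta/2$, and $\dist(V^{(1)}, \{A,B\}) < \delta_1$ a.e. Since $G^{(1)}$ is piecewise linear, $V^{(1)}$ is piecewise constant, and, inspecting the construction in Lemma~\ref{basic}, its constancy sets are scaled copies of sub-pieces of the diamond $\widetilde\Om_\e$, which have Lipschitz boundary; so $V^{(1)}$ is piecewise constant in the sense used here. By the choice of $\delta_1$, the bound on $\dist(V^{(1)},\{A,B\})$ forces $V^{(1)} \in K^{i-1}$ a.e. Applying $(\star_{i-1})$ on $\Om'$ to $F' = V^{(1)}$ with parameter $\delta/2$ produces $H$ with $H|_{\partial\Om'}=0$, $\|H\|_{L^\infty}<\delta/2$ and $\Lcal(H)+V^{(1)} \in K$ a.e. Then $G := G^{(1)}+H$ is piecewise linear and skew-valued, vanishes on $\partial\Om'$, has $\|G\|_{L^\infty} \le \|G^{(1)}\|_{L^\infty}+\|H\|_{L^\infty} < \delta$, and satisfies $\Lcal(G) + c = \Lcal(H) + V^{(1)} \in K$ a.e. This proves $(\star_i)$.

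The convex-integration content — turning a rank-$(n-1)$ convex combination into an oscillating divergence-free field with the prescribed mean and zero boundary datum — is entirely carried by Lemma~\ref{basic}, so the induction adds no new analytic ingredient. The points needing care are of a bookkeeping nature: checking at each stage that the field just built is piecewise constant on open sets with Lipschitz boundary (this is what lets the next induction step, and the final assembly through \eqref{eq:extension}, go through, and it is exactly where one uses that the diamond $\widetilde\Om_\e$ of Lemma~\ref{basic} has Lipschitz boundary), and choosing the error parameter in each use of Lemma~\ref{basic} below the distance of the two endpoints to the complement of the open set $K^{i-1}$, so that the intermediate field stays in $K^{i-1}$ and the induction hypothesis remains applicable. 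I expect the geometric bookkeeping of the decompositions to be the only genuinely delicate point.
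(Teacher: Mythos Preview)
Your proposal is correct and follows essentially the same approach as the paper: induct on the lamination level, use Lemma~\ref{basic} to oscillate near the two endpoints $A,B\in K^{i-1}$, exploit the openness of $K^{i-1}$ to conclude the intermediate field lies in $K^{i-1}$, and then invoke the induction hypothesis on each piece. The only differences are organizational---you fold the piecewise-constant case into the induction hypothesis $(\star_i)$ and start at $i=0$, whereas the paper first treats constant $F$ and starts at $i=1$---and you are slightly more explicit than the paper about choosing the parameter in Lemma~\ref{basic} below $\dist(\{A,B\},\Mmn\setminus K^{i-1})$.
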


\begin{proof}
We first assume that $F$ is constant. Then $F\in K^{i}$ for some $i$. 
We argue by induction on $i$. 
If $i=1$, then the result holds by Lemma \ref{basic}. Now assume that the result 
is true for all $i\leq j$ and let $F\in K^{j+1}$. Then there exist $A,B\in K^{j}$ 
such that $\rank(A-B)\leq n-1$ and $F:=\t A+(1-\t)B$ for some $\t\in(0,1)$. 
By Lemma \ref{basic} there exists a piecewise linear function 
$G$ such that $\|G\|_{L^{\infty}(\Om)}<\delta/2$, $\dsp G|_{\partial \Om}=0$ and 
$\dsp \dist(\Lcal (G),\{A-F,B-F\})<\delta$. 
Since the set $K^{j}$ is open (see Remark \ref{rem-K^{L}}),
for sufficiently small $\delta$, the function 
$U:=\Lcal (G)+F$ satisfies $\dsp U\in K^{j}$ a.e. The latter inclusion implies that 
$U$ can be written in the form $\dsp U=\sum_{h}\chi_{_{\Om_{h}}}(C_{h}+F)$, with 
$C_{h}+F\in K^{j}$ and with $\chi_{_{\Om_{h}}}$ characteristic functions of disjoint 
open  subsets $\Omega_h$  of $\Om$ with Lipschitz boundary and  $|\Omega \setminus \bigcup_h \Omega_h| = 0$.
We can now apply the induction hypothesis on each subset $\Om_{h}$ to deduce the 
existence of functions $G_{h}\in \big(W^{1,\infty}(\Om_{h};\Mnn)\big)^m$ such that 
\begin{align*}
&\Lcal (G_{h})+C_{h}+F\in K  \text{ a.e. in }\Om_{h}\,, \\
&  \|G_{h}\|_{L^{\infty}(\Om_{h})}<\delta/2\,, \\ 
& G_{h}|_{\partial \Om_{h}}=0\,. 
\end{align*}

\no
Finally let $\dsp\Gd (x):=\sum_{h}\chi_{_{\Om_{h}}} G_{h}+G$. Then
$\dsp  \|\Gd\|_{L^{\infty}(\Om)}<\delta$ and $\dsp \Gd|_{\partial \Om}=0$ and
by \eqref{eq:extension} we have 
\begin{equation*}\Lcal(G_\delta) + F  = \sum_h\chi_{\Omega_h} (\Lcal(G_h) + C_h + F) \in K \mbox{    a.e.}
\end{equation*}

Now let $F$ be piecewise constant. Then $\dsp F=\sum_{k}\chi_{_{\Om_{k}}}F_{k}$ 
with $F_{k}\in K^{L}$. 
We now use the previous argument in each subdomain $\Om_{k}$ where $F$ is constant to obtain the existence of  piecewise linear functions 
$\dsp\Gd^{k}\in \big(W^{1,\infty}(\Om_{k};\Mnn)\big)^m$ such that 

\begin{align*}
& \Lcal(\Gd^{k})+F_{k}\in K \text{ \rm a.e.}\,,\\
&  \|\Gd^{k}\|_{L^{\infty}(\Om_{k})}<\delta   \,,\\
& \Gd^{k}|_{\partial \Om_{k}}=0\,.
\end{align*}

\no
Finally we define $\dsp \Gd :=\sum_{k}\chi_{_{\Om_{k}}} \Gd^{k}$ and set
$\dsp \Vd:= \Lcal(\Gd) + F$. Using again \eqref{eq:extension} we easily deduce the assertion.
\end{proof}

The next step is to pass from open sets to more general sets $K\subset\Mmn$. 
In order to do this we approximate 
$K$ by open sets $\U_i$ and we construct approximate solutions $V_i$ that  satisfy $V_i\in\U_i$.  
Each of the approximate solutions $V_{i+1}$ is obtained  from $V_i$ by an application of 
Lemma \ref{lemma-k-open} . 
This suggests in which sense the sets $\U_i$ have to approximate $K$.

\begin{definition}\label{in-approximation}
{\rm
Let $K\subset\Mmn$. We say that a sequence of open sets $\{\U_{i}\}\subset\Mmn$ 
is an {\em in-approximation} of $K$ if the following three conditions hold:

1. $\dsp \U_{i}\subset \U_{i+1}^{L}$\,;

2. the sets $\U_{i}$ are uniformly bounded;

3. if a sequence $F_{i}\in\U_i$ converges to $F$ as $i\to\infty$, then $F\in K$.
}
\end{definition}

\no
The name 'in-approximation' was introduced by Gromov \cite{gromov}.
Note that a necessary condition for $K$ to admit an in-approximation is that the set 
$\Int (K^L)$  is non-empty. Note also that the notion of in-approximation is related
to a notion of convexity. In this section we use lamination convexity with respect to
the cone of matrices of rank (at most) $n-1$ because Lemma \ref{basic}
only holds if $\rank(A-B) \le n-1$. In the next section we will prove a similar
lemma, but without any restriction. Thus in that section the natural cone is the whole
space (in that case $\R^{10}$) and in condition 1. in the in-approximation we will use the ordinary
convex hull.

\medskip


We are now ready to state the main result of this section.

\begin{theorem}\label{mainthm}
Assume that $K$ admits an in-approximation by open sets $\U_{i}$ and let $F\in \U_{1}$. 
Then, for each $\delta>0$, there exists $V_{\delta}\in L^{\infty}(\Om;\Mmn)$ such that   

\begin{align}
\label{con1}
& V_{\delta}=\Lcal (H_{\delta}) +F \text{ with } 
H_{\delta}\in \big(W^{1,\infty}(\Om;\Mnn)\big)^m \,,\\
\label{con2}
& V_{\delta}\in K \quad {\rm a.e.}\,, \\
\label{con3}
& \|H_{\delta}\|_{L^{\infty}(\Om)}<\delta   \,,\\
\label{con4}
& H_{\delta}|_{\partial \Om}=0\,.
\end{align}

\end{theorem}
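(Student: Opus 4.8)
The plan is to prove Theorem \ref{mainthm} by the standard convex integration iteration scheme: we build a sequence of piecewise-linear potentials $H_i$ whose associated divergence-free fields $V_i = \Lcal(H_i) + F$ live in the open sets $\U_i$, and we arrange that the successive corrections $H_{i+1} - H_i$ are small enough in $L^\infty$ that both the sequence of potentials and the sequence of fields converge; the limit field will then take values in $K$ by condition 3 of the in-approximation. First I would set up the iteration: since $F \in \U_1$ and $\U_1 \subset \U_2^L$, Lemma \ref{lemma-k-open} (applied with $K = \U_2$, which is open and bounded, and the constant function $F \in \U_2^L$, which is trivially divergence free) produces $H_1 \in (W^{1,\infty}(\Om;\Mnn))^m$ piecewise linear with $V_1 := \Lcal(H_1) + F \in \U_2$ a.e., $\|H_1\|_{L^\infty} < \delta/2$, and $H_1|_{\partial\Om} = 0$.

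Next comes the inductive step. Suppose $V_i = \Lcal(H_i) + F \in \U_{i+1}$ a.e., with $H_i$ piecewise linear and zero on $\partial\Om$. Because $V_i$ is piecewise constant (it is a finite or countable combination $\sum_k \chi_{\Om_k}(C_k + F)$ with $C_k + F \in \U_{i+1}$, each $\Om_k$ open with Lipschitz boundary), and because $\U_{i+1} \subset \U_{i+2}^L$, I can apply Lemma \ref{lemma-k-open} on each $\Om_k$ with target set $\U_{i+2}$ and the constant value $C_k + F$. This yields piecewise-linear $\Gamma^k \in (W^{1,\infty}(\Om_k;\Mnn))^m$ with $\Lcal(\Gamma^k) + C_k + F \in \U_{i+2}$ a.e. in $\Om_k$, $\|\Gamma^k\|_{L^\infty(\Om_k)} < \delta 2^{-(i+1)}$, and $\Gamma^k|_{\partial\Om_k} = 0$. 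Gluing via \eqref{eq:extension}, set $H_{i+1} := H_i + \sum_k \chi_{\Om_k}\Gamma^k$; then $V_{i+1} := \Lcal(H_{i+1}) + F = \sum_k \chi_{\Om_k}(\Lcal(\Gamma^k) + C_k + F) \in \U_{i+2}$ a.e., $H_{i+1}$ is piecewise linear and vanishes on $\partial\Om$, and $\|H_{i+1} - H_i\|_{L^\infty(\Om)} < \delta 2^{-(i+1)}$.

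Finally I would pass to the limit. The bound $\|H_{i+1} - H_i\|_{L^\infty} < \delta 2^{-(i+1)}$ gives a Cauchy sequence in $C(\bar\Om)$ (each $H_i$ is Lipschitz, hence continuous up to the boundary), so $H_i \to H_\delta$ uniformly with $\|H_\delta\|_{L^\infty} < \delta$ and $H_\delta|_{\partial\Om} = 0$; moreover $\sum_i \|H_{i+1} - H_i\|_{L^\infty} < \delta$, and since the $H_i$ are also uniformly Lipschitz only on each fixed domain, more care is needed for the derivatives — this is the genuine obstacle. The standard fix, which I would follow, is to additionally control the measure of the set on which $V_{i+1}$ differs from $V_i$ at each stage, so that $V_i$ converges in measure (equivalently in $L^p$ for all finite $p$, using uniform boundedness from condition 2) to some $V_\delta$; one checks that $V_\delta = \Lcal(H_\delta) + F$ in $\dpr(\Om)$ because $\Lcal$ is weakly continuous on the bounded sequence $H_i \to H_\delta$, and that $V_\delta \in K$ a.e. by condition 3 of Definition \ref{in-approximation} applied pointwise a.e. along a subsequence realizing the in-measure convergence. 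To make the measure estimate work, at the $i$-th step I would only apply the correction $\Gamma^k$ on a subdomain of $\Om_k$ of controlled measure where needed and keep $H$ unchanged elsewhere, or more simply invoke that Lemma \ref{basic}'s construction already localizes the change to the diamond sets $\widetilde\Om_\e$; choosing the parameters so that $|\{V_{i+1} \ne V_i\}| < 2^{-i}$ is harmless since $\U_{i+2}$ is open and the inclusion $V_{i+1} \in \U_{i+2}$ is stable under small perturbations. The main difficulty is thus reconciling the $L^\infty$ smallness of the potentials (easy, geometric series) with the a.e. convergence of the fields into $K$ (requires the in-measure bookkeeping), and verifying that the weak limit relation $V_\delta = \Lcal(H_\delta) + F$ survives the limit.
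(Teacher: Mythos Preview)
Your iteration is set up correctly and you rightly identify the crux: one needs \emph{strong} (a.e.) convergence of the $V_i$, not just weak-$*$, in order to invoke condition~3 of Definition~\ref{in-approximation}. However, your proposed mechanism for obtaining this --- arranging $|\{V_{i+1}\neq V_i\}|<2^{-i}$ --- cannot work in this setting. The in-approximation sets are \emph{not} nested: we only have $\U_{i+1}\subset\U_{i+2}^L$, and condition~3 forces the $\U_i$ to collapse onto $K$, so in general $\U_{i+1}\not\subset\U_{i+2}$. Passing from $V_i\in\U_{i+1}$ to $V_{i+1}\in\U_{i+2}$ therefore requires modifying $V_i$ on essentially all of $\Om$; leaving $V_i$ untouched on a set of large measure would leave $V_{i+1}\in\U_{i+1}\setminus\U_{i+2}$ there, and no ``stability under small perturbations'' of $\U_{i+2}$ rescues this, since $V_i$ is not close to $\U_{i+2}$ to begin with. (The diamond sets $\widetilde\Om_\e$ in Lemma~\ref{basic} do not help either: the Vitali covering fills $\Om$ with their rescaled copies up to measure zero, so the modification is global.)

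The paper obtains strong convergence by a different, adaptive device: the increments $\delta_{i+1}$ are \emph{not} chosen in advance as $\delta 2^{-(i+1)}$, but are fixed only \emph{after} $H_i$ has been constructed. One first picks a mollification scale $\e_i\in(0,2^{-i})$ with
\[
\|\varrho_{\e_i}*\Lcal(H_i)-\Lcal(H_i)\|_{L^1(\Om_i)}<2^{-i},
\]
and then sets $\delta_{i+1}:=\delta_i\e_i$. Since $\|\nabla\varrho_{\e_i}\|_{L^1}\le C/\e_i$, this coupling yields
\[
\|\varrho_{\e_i}*(\Lcal(H_i)-\Lcal(H_\infty))\|_{L^1(\Om_i)}\le \frac{C}{\e_i}\|H_i-H_\infty\|_{L^\infty}\le \frac{C}{\e_i}\sum_{k>i}\delta_k\le \frac{2C}{\e_i}\delta_{i+1}=2C\delta_i,
\]
and combining the two estimates gives $\Lcal(H_i)\to\Lcal(H_\infty)$ in $L^1(\Om)$, hence $V_i\to V_\delta$ a.e.\ along a subsequence, from which $V_\delta\in K$ follows. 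The point is that the $L^\infty$-smallness of the potential correction is traded against the oscillation scale of the \emph{previous} field, not against a fixed geometric series; your scheme misses this coupling.
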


\begin{proof}
We construct a sequence of piecewise constant divergence free maps $V_{i}$ such 
that 

\begin{align}\label{sequence-V_{i}}
& V_{i}=\Lcal (H_{i}) +F \text{ with } 
H_{i}\in \big(W^{1,\infty}(\Om;\Mnn)\big)^m \,,\\
\non & V_{i}\in \U_{i} \quad {\rm a.e.}\,, \\
\non & \|H_{i+1}-H_{i}\|_{L^{\infty}(\Om)}<\delta_{i+1}\,\\ 
\non & H_{i}|_{\partial \Om}=0\,.
\end{align}

\no
To start with, set $\dsp H_{1}:=0$ and $V_{1}:=F$. 
Since $F\in \U_{2}^{L}$, we can apply Lemma \ref{lemma-k-open} to deduce the existence of 
a function $V_{2}$ such that 

\begin{align*}
& V_{2}=\Lcal (G_{2}) +F \text{ with } G_{2}\in \big(W^{1,\infty}(\Om;\Mnn)\big)^m 
\text { and piecewise linear}\,,\\
& V_{2}\in \U_{2} \quad {\rm a.e.}\,, \\
& \|G_{2}\|_{L^{\infty}(\Om)}<\delta_{2}   \,,\\
& G_{2}|_{\partial \Om}=0\,.
\end{align*}

\no
with $\delta_{2}=\delta$. We then define $H_{2}=G_{2}$.
To construct $V_{i+1}$ and $\delta_{i+1}$ from $V_{i}$ and $\delta_{i}$, we proceed 
as follows. 
Let 

$$
\Om_{i}:=\{x\in\Om : \dist(x,\partial\Om)>1/2^{i})\}\,.
$$

\no
Let $\varro$ be a standard smooth convolution kernel
in $\R^{n}$, i.e., $\rho \geq 0$, $\int \rho = 1$, $\mbox{Spt} \, \rho \subset \{|x|<1\}$, 
and  let $\dsp\varro_{\e_{i}}(x):=\e_{i}^{-n}\varro(x/\e_{i})$. 
We choose $\dsp \e_{i}\in(0,2^{-i})$ so that 

\begin{equation}\label{est1}
\big\|\varro_{\e_{i}}*\Lcal (H_{i})-\Lcal (H_{i})\big\|_{L^{1}(\Om_{i})}<\frac{1}{2^{i}}\,.
\end{equation}

\no
where the convolution acts on each entry of the matrix field $\Lcal (H_{i})$.
Now let 

\begin{equation}\label{delta-choice}
\delta_{i+1}=\delta_{i}\e_{i}\,.
\end{equation}

\no
and use Lemma \ref{lemma-k-open} to construct a function 
$G_{i+1}\in \big(W^{1,\infty}(\Om;\Mnn)\big)^m$ such that 

\begin{align*}
& \Lcal (G_{i+1})+V_{i}\in \U_{i+1}\quad {\rm a.e.}  \,,\\
& \|G_{i+1}\|_{L^{\infty}(\Om)}<\delta_{i+1}\,.
\end{align*}

\no
Next we set $\dsp H_{i+1}:=\sum_{j=2}^{i+1}G_{j}$ and define $V_{i+1}$ 
according to \eqref{sequence-V_{i}}, so that  
$$
V_{i+1}=\Lcal (G_{i+1})+V_{i}\,.
$$

\no
Since $\dsp\sum_{i=2}^{\infty}{\delta_{i}}<\delta/2$ and, for $i>j$,

\begin{equation}\label{cauchy-est}
\|H_{i}-H_{j}\|_{L^{\infty}(\Om)} \leq\sum_{k=j+1}^{i}\|G_{k}\|_{L^{\infty}(\Om)}\,,
\end{equation}

\no
we find that $H_{i}\to H_{\infty}$ uniformly. 
Moreover, since by construction the sequence $\{H_{i}\}$ is 
uniformly bounded in $W^{1,\infty}(\Om)$, we have that 
$H_i \weakst H_\infty$ in $W^{1,\infty}$ weak*. In particular

$$
\Lcal (H_{i})\weakst \Lcal (H_{\infty}) \,\text{ in }L^{\infty} \text{ weak }*. 
$$

\no
Taking $\dsp H_{\delta}=H_{\infty}$ 
and $\dsp V_{\delta}:=\Lcal (H_{\delta})+F$, 
we see that conditions \eqref{con1}, \eqref{con3} and \eqref{con4} hold. 
We are left to show that $\dsp V_{\delta}\in K$ a.e.  
To this end, we will prove the strong convergence of $\Lcal (H_{i})$ to $\Lcal (H_\infty)$ 
in $L^{1}$. 
Indeed, since 
$$ 
\int_{\Om}\big(\Lcal (\Phi)(y)\big)_{kj}\varro(x-y)dy=-\sum_{i=1}^n\int_{\Om} 
\Phi^k_{ij}(y)\frac{\partial\varro}{\partial x_i}(x-y)dy\,,
\quad \forall\,\Phi\in \big(W^{1,\infty}(\Om;\Mnn)\big)^m\,,
$$ 
\no
and since $\|\nabla\varro_{\e_{i}}\|_{L^{1}}<C/\e_{i}$, we deduce from 
\eqref{delta-choice} and \eqref{cauchy-est}

\begin{align}\label{est2}
\non 
\big\|\varro_{\e_{i}}*\big(\Lcal (H_{i})-\Lcal (H_{\infty})\big)\big\|_{L^{1}(\Om_{i})}
&\leq\frac{C}{\e_{i}}\big\| H_{i}- H_{\infty}\big\|_{L^{\infty}(\Om)}\\
\non &\leq\frac{C}{\e_{i}} \sum_{k=i+1}^{\infty}\delta_{k}\\
\non &\leq 2\frac{C}{\e_{i}}\delta_{i+1}\\
&\leq C'\delta_{i}\,.
\end{align}

\no
Combining \eqref{est1} and \eqref{est2} we get

\begin{align*}
\big\|\Lcal (H_{i})-\Lcal (H_{\infty})\big\|_{L^{1}(\Om)}\leq C'\delta_{i}
+2^{-i}
&
+\big\|\varro_{\e_{i}}*\Lcal (H_{\infty})-
\Lcal (H_{\infty})\big\|_{L^{1}(\Om_{i})}\\
&+\big\|\Lcal (H_{i})-\Lcal (H_{\infty})\big\|_{L^{1}(\Om\setminus\Om_{i})}\,.
\end{align*}

\no
Since $\Lcal (H_{i})$ and $\Lcal (H_{\infty})$ are bounded, we obtain 
$\Lcal (H_{i})\to\Lcal (H_{\infty})$ in $L^{1}(\Om)$ and thus 
$V_{i}\to V_{\delta}$ in $L^{1}(\Om)$. 
Therefore there exists a subsequence $V_{i_{j}}$ such that 

$$
V_{i_{j}}\to V_{\delta} \quad{\rm a.e.}\,
$$

\no
It follows from the definition of in-approximation that 

$$
V_{\delta}\in K \quad {\rm a.e.}
$$

\end{proof}

\section{Applications of the convex integration results to the study of the Born-Infeld equations}
\label{young-meas}

\subsection{Approach by Young measures}\label{YM-subsec}

We formulate problem \eqref{weakpb1}-\eqref{weakpb2} in the language of $\Acal$-convexity 
(see, {\em e.g.}, \cite{fon-muller}, \cite{tartar}).
Let $\Omega\subset\R^{3}$ be an open bounded domain and let $\Mcal$ be defined by \eqref{BI-manifold}. 
Let $A^{(1)},A^{(2)},A^{(3)}\in\Mtten$ be defined as follows

\begin{align*}
& A^{(1)}=\left(
\begin{array}{llllllllll}
1 & 0 & 0 & 0 & 0 & 0 & 0 & 0 & 0 & 0 \\
0 & 0 & 0 & 1 & 0 & 0 & 0 & 0 & 0 & 0
\end{array}
\right)\,, \\
& A^{(2)}=\left(
\begin{array}{llllllllll}
0 & 1 & 0 & 0 & 0 & 0 & 0 & 0 & 0 & 0 \\
0 & 0 & 0 & 0 & 1 & 0 & 0 & 0 & 0 & 0
\end{array}
\right)\,, \\  
& A^{(3)}=\left(
\begin{array}{llllllllll}
0 & 0 & 1 & 0 & 0 & 0 & 0 & 0 & 0 & 0 \\
0 & 0 & 0 & 0 & 0 & 1 & 0 & 0 & 0 & 0
\end{array}
\right)\,.
\end{align*}

\no
We introduce the operators 

\begin{align*}
& \Acal(V):=\sum_{i=1}^{3}A^{(i)}\frac{\partial V}{\partial x_{i}}\,, \quad 
V:\Om\to \R^{10}\,,\\
& \A(w):=\sum_{i=1}^{3}A^{(i)}w_{i}\in\Lin(\R^{10};\R^{2})\,, \quad w\in\R^{3}\,,
\end{align*}

\no
where $\Lin(\R^{10};\R^{2})$ denotes the space of linear operators from $\R^{10}$ to 
$\R^{2}$. 
The operator $\Acal$ satisfies the {\em constant rank} property, {\em i.e.}, 

\begin{equation*}
\rank\A(w)=2 \quad \forall\, w\in\S^{2}\,,
\end{equation*}

\no
where $\S^{2}$ is the unit sphere in $\R^{3}$. 
Moreover 

\begin{equation*}
\ker\A(w)=\{(\alpha,\beta,\gamma)\in\R^{3}\times\R^{3}\times\R^{4}: 
\alpha\perp w\,,\beta\perp w\}=\R^{2}\times\R^{2}\times\R^{4}\,.
\end{equation*}

\no
Therefore the {\em characteristic cone} $\Lambda$ is all of $\R^{10}$. Indeed 

\begin{equation*}
\Lambda:=\cup _{w\in S^{2}}\ker\A(w)=\{(\alpha,\beta)\in\R^{3}\times\R^{3}:
\exists\,\xi\in\R^{3} \text{ \rm such that }\xi\perp\alpha,\xi\perp\beta \}\times\R^{4}=
\R^{3}\times\R^{3}\times\R^{4}\,.
\end{equation*}

\no
Thus $\Lambda$-convexity reduces to standard convexity. 
In terms of 
the constant rank operator $\Acal$ our problem reads as 

\begin{align}
\label{problemA}& \Acal (V_{j})=0  \quad\text{in }\dpr(\Om),\\
\label{inclusion}& V_{j}\in \Mcal \quad\text{a.e. in } \Om\,. 
\end{align}

\no
One can also consider the approximate version of \eqref{problemA}, where 
the differential constraint on the sequence $\{V_{j}\}$ is replaced by the weaker condition 

\begin{equation}\label{approx}
\Acal (V_{j})\to 0  \quad\text{strongly in }W^{-1,p'}(\Om)\,.
\end{equation}

\no
The next Theorems \ref{YM1} and \ref{converse} and their corollaries are special case 
of more general results contained in \cite{fon-muller}, where more general constant-rank operators 
are considered. Let us also mention that, in the gradient case, i.e., when the operator $\Acal$ is the 
curl operator, such results were first established by Kinderlehrer and Pedregal \cite{kind-ped}.

\begin{theorem}\label{YM1}
Let $1\leq p<+\infty$. 
Suppose that the sequence $\{V_{j}\}$ generates the Young measure  $\{\nu_x\}_{x\in\Om}$
and let $V_{j}\weak V$ in $L^{p}(\Om;\R^{10})$. 
If $\{V_{j}\}$ 
satisfies \eqref{problemA}, or its approximate 
version \eqref{approx}, then 
\begin{align*}
& \langle\nu_{x},id\rangle=V(x)\in \ker \Acal \,,\\
& \int_{\Om}\int_{\R^{10}}|M|^p d\nu_x(M) < \infty\,.
\end{align*}
\no
If in addition the sequence $\{V_{j}\}$ is uniformly bounded in 
$L^{\infty}(\Om;\R^{10})$ and \eqref{inclusion} holds, 
then 

\begin{equation}\label{suppYM}
\supp\,\nu_{x}\subset \Mcal \text{ for }\text{\rm a.e. } x\in\Om\,.
\end{equation}
\end{theorem}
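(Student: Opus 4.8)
The plan is to exploit the fact that the differential constraint here is particularly benign: since the characteristic cone $\Lambda$ equals all of $\R^{10}$, every linear constraint on the center of mass comes for free and the only genuine content is (i) identifying the barycenter, (ii) the uniform $p$-integrability, and (iii) the support localization under the $L^\infty$ bound. I would treat the three conclusions in that order.

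First, the barycenter. Since $\{V_j\}$ generates the Young measure $\{\nu_x\}$ and $V_j \weak V$ in $L^p$, the standard Young measure theory (see \cite{fon-muller}) gives $\langle \nu_x, \mathrm{id}\rangle = V(x)$ a.e., and the weak-$L^p$ bound gives a uniform bound on $\int_\Om \int_{\R^{10}} |M|^p\, d\nu_x(M)$ by weak lower semicontinuity applied to $M \mapsto |M|^p$; this is exactly the statement that the Young measure is ``$L^p$-tight'' in the sense of \cite{fon-muller}. To see $V \in \ker\Acal$: from \eqref{problemA} we have $\Acal V_j = 0$ in $\dpr(\Om)$, and passing to the weak limit (each $A^{(i)}$ has constant coefficients, so $\Acal$ is weak-to-weak continuous as a map $L^p \to W^{-1,p}$) yields $\Acal V = 0$; in the approximate case \eqref{approx}, $\Acal V_j \to 0$ strongly in $W^{-1,p'}$ while $\Acal V_j \weak \Acal V$, so again $\Acal V = 0$. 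Here the key identity is $\ker\Acal = \ker \A$ pointwise after Fourier transform, but since we only need $V$ itself to lie in $\ker\Acal$, no such pointwise reasoning is required.

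Next, \eqref{suppYM} under the extra hypothesis $\sup_j \|V_j\|_{L^\infty} \le C$ and $V_j \in \Mcal$ a.e. Here I would argue by testing against a continuous function that detects the distance to $\Mcal$. Let $g(M) := \dist(M, \Mcal)$, which is continuous, nonnegative, and vanishes exactly on $\Mcal$. Under the $L^\infty$ bound the Young measures are supported in the fixed ball $\overline{B_C}$, so $g$ restricted to that ball is bounded and continuous, hence a legitimate test function for the Young measure. From $V_j \in \Mcal$ a.e.\ we get $g(V_j) = 0$ a.e., so $\int_\Om \int_{\R^{10}} g(M)\, d\nu_x(M)\, dx = \lim_j \int_\Om g(V_j)\, dx = 0$; since the integrand is nonnegative this forces $\int_{\R^{10}} g\, d\nu_x = 0$ for a.e.\ $x$, and since $g > 0$ off $\Mcal$ and $\nu_x$ is a probability measure, $\nu_x(\R^{10}\setminus\Mcal) = 0$, i.e.\ $\supp\nu_x \subset \overline{\Mcal} = \Mcal$ (the manifold is closed).

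I expect no serious obstacle here: the only point requiring a little care is the justification that $g$, or more precisely $\varphi \cdot g$ for a cutoff $\varphi$, is an admissible test function for the generated Young measure, which is where the $L^\infty$ bound is used — without it one would need the $p$-integrability to control the tails, and one would instead test with $\min(g, R)$ and let $R \to \infty$ using the uniform $L^p$ bound from the first part. Since in \eqref{suppYM} we are given the $L^\infty$ bound, the cutoff argument is immediate. The whole proof is a direct application of the fundamental theorem on Young measures together with weak continuity of the constant-coefficient operator $\Acal$; the substantive work has already been done in identifying that $\Lambda$-convexity collapses to ordinary convexity.
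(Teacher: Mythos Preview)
Your argument is correct. The paper does not give its own proof of this theorem: it simply states that Theorems~\ref{YM1} and~\ref{converse} are special cases of the general results in Fonseca--M\"uller~\cite{fon-muller} (and, in the gradient case, Kinderlehrer--Pedregal~\cite{kind-ped}). Your write-up supplies exactly the standard direct verification one extracts from that reference: barycenter identification and the $L^p$ moment bound from the fundamental theorem on Young measures, $\Acal V=0$ from weak continuity of the constant-coefficient operator (testing against $C_c^\infty$ suffices, which also handles the $W^{-1,p'}$ approximate case), and support localization by testing with $\dist(\cdot,\Mcal)$ under the $L^\infty$ bound. There is nothing to correct and nothing materially different from what the paper invokes by citation.
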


\begin{corollary}
Under the assumptions of Theorem \ref{YM1}, if \eqref{suppYM} holds, then 
\begin{equation*}
V(x)\in \Mcal^{c} \text{ for } \text{ \rm a.e. }x\in\Om\,.
\end{equation*}
\end{corollary}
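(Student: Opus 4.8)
The plan is to combine the support property of the generated Young measure with a separating hyperplane argument; the whole proof is short. Recall that \eqref{suppYM} is established in Theorem \ref{YM1} only under the additional hypotheses that $\{V_j\}$ be uniformly bounded in $L^\infty(\Om;\R^{10})$ and that \eqref{inclusion} hold. So fix $R$ with $|V_j|\le R$ a.e.\ for all $j$ and set $B_R:=\{M\in\R^{10}:|M|\le R\}$; then the $V_j$ take values a.e.\ in $K:=\Mcal\cap B_R$, which is compact because $\Mcal$ is closed in $\R^{10}$ (it is the common zero set of the continuous maps $(D,B,P,h)\mapsto P-D\wedge B$ and $(D,B,P,h)\mapsto h-\sqrt{1+|D|^2+|B|^2+|P|^2}$). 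By the standard properties of Young measures generated by sequences valued in a fixed closed set, this gives $\supp\nu_x\subset K$ for a.e.\ $x$, and by Theorem \ref{YM1} we also have, for a.e.\ $x$, that $\nu_x$ is a probability measure with $V(x)=\langle\nu_x,\mathrm{id}\rangle=\int_{\R^{10}}M\,d\nu_x(M)$. I would fix an $x$ in the full-measure set where all this holds; it then suffices to show $V(x)\in\co(K)$, since $\co(K)\subset\co(\Mcal)=\Mcal^c$.

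Next I would note that by Carath\'eodory's theorem $\co(K)$ is compact, hence in particular closed. If $V(x)\notin\co(K)$, then, $\co(K)$ being a nonempty closed convex subset of $\R^{10}$ not containing $V(x)$, the separating hyperplane theorem provides $\ell\in(\R^{10})^*$ and $c\in\R$ with $\ell\le c$ on $\co(K)$ and $\ell(V(x))>c$. Since $\ell$ is linear and continuous and $\nu_x$ is compactly supported, $\ell$ commutes with the barycenter integral, so
\[
\ell(V(x))=\int_{\R^{10}}\ell(M)\,d\nu_x(M)\le c,
\]
using that $\ell\le c$ on $K\supset\supp\nu_x$ and that $\nu_x$ is a probability measure; this contradicts $\ell(V(x))>c$. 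Hence $V(x)\in\co(K)\subset\Mcal^c$, and since $x$ ranged over a set of full measure we conclude $V(x)\in\Mcal^c$ for a.e.\ $x\in\Om$.

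I do not expect a genuine obstacle. The only points meriting a little care are: arranging that all the needed properties of $\nu_x$ (probability measure, support in $K$, correct barycenter) hold off a single common null set, so that a pointwise statement upgrades to an a.e.\ statement; using that the convex hull of the compact set $K$ is compact — this is what makes $\co(K)$ closed, makes the strict separation available, and keeps us inside $\Mcal^c$ rather than only its closure; and noting that the uniform $L^\infty$ bound, already required in order to have \eqref{suppYM}, is precisely what makes $\nu_x$ compactly supported, so that interchanging $\ell$ with the integral defining $V(x)$ requires no further integrability check.
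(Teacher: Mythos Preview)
Your proof is correct. The paper states this corollary without proof, treating it as immediate from the fact that the barycenter of a probability measure lies in the closed convex hull of its support; your argument via compactness of $\co(K)$ and a separating hyperplane is precisely the standard way to make this rigorous, so there is nothing to compare.
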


\begin{theorem}\label{converse}
Let $1\leq p<+\infty$, and let $\{\nu_x\}_{x\in\Om}$ be a weakly measurable family of 
probability measures on $\R^{10}$. Suppose that 

\begin{align*}
& \langle\nu_{x},id\rangle\in \ker \Acal \,,\\
& \int_{\Om}\int_{\R^{10}}|M|^p d\nu_x(M) < \infty\,.
\end{align*}
\no
Then there exists a sequence $\{V_j\}\subset L^{p}(\Om;\R^{10})$ satisfying 
\eqref{problemA} that generates $\{\nu_x\}$.

\end{theorem}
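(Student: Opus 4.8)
The plan is to follow the classical construction of oscillating sequences adapted to a constant-rank operator $\Acal$, exactly as in Tartar's framework and in \cite{fon-muller}. Since here the characteristic cone $\Lambda$ is all of $\R^{10}$, $\Lambda$-convexity coincides with ordinary convexity, which simplifies matters considerably: any probability measure with barycentre in $\ker \Acal$ (a linear constraint) can be realized without having to worry about which directions are admissible for oscillation. First I would reduce to the case where $\{\nu_x\}$ is a \emph{homogeneous} Young measure on a small cube and $\langle \nu_x, id\rangle = V$ is constant; the general measurable family is then recovered by a standard approximation/partition argument (chopping $\Om$ into small cubes on which $\nu_x$ and $V$ are nearly constant, using that both $x \mapsto \langle \nu_x, id\rangle$ and the map into the space of measures are measurable, together with a diagonal argument in $j$). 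On a single cube, I would further reduce to the case where $\nu$ is a finite convex combination $\nu = \sum_{k=1}^{N} \lambda_k \delta_{M_k}$ with $\sum_k \lambda_k M_k = V \in \ker\Acal$ and all $M_k$ in a bounded set: an arbitrary $\nu$ with finite $p$-th moment is approximated in a suitable metric by such atomic measures, and one controls the $L^p$ tails using $\int\int |M|^p \, d\nu_x < \infty$.

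The core of the argument is then to produce, for each such atomic $\nu$, a sequence $\{V_j\}$ bounded in $L^p$ (in fact in $L^\infty$ in the truncated problem) with $\Acal V_j = 0$ in $\dpr(\Om)$, taking values close to $\{M_1,\dots,M_N\}$ with volume fractions converging to $\{\lambda_1,\dots,\lambda_N\}$. The mechanism is a layered construction: write $V = \sum_k \lambda_k M_k$ and peel off one atom at a time via a rank-one-type lamination in the kernel of $\A(w)$ for a suitable $w \in \S^2$. Concretely, since $M_k - M_{k'} \in \R^{10}$ is arbitrary and $\ker\A(w) = \R^2 \times \R^2 \times \R^4$ for $w \perp \cdot$, one can always pick directions so that the jump $M_k - M_\ell$ lies in some $\ker\A(w)$: this is exactly the statement $\Lambda = \R^{10}$. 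One then builds a highly oscillatory plane-wave profile $V_j(x) = \phi(j\, w\cdot x)$ on the relevant layers with $\phi$ taking the two values in the right proportion; such a $V_j$ automatically satisfies $\Acal V_j = 0$ because $\A(w)$ annihilates the jump. Iterating over the $N$ atoms (nesting the oscillations at successively finer scales and restricting to the appropriate sublayers, as in the construction of laminates) yields a sequence whose Young measure is $\nu$ and which satisfies \eqref{problemA} exactly. Localizing to $\Om$ (cutting off near $\partial\Om$) costs an error term that goes to zero in $W^{-1,p'}$ but can in fact be absorbed since $\Acal$ has constant coefficients and one can use the potential representation; I would instead simply use that the construction is compactly supported modifications and patch cubes together.

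The main obstacle is the bookkeeping in the iterated lamination: one must choose the oscillation scales $\e_1 \gg \e_2 \gg \dots \gg \e_N$ and the nested subdomains so that (i) the volume fractions of the values $M_k$ converge to $\lambda_k$, (ii) the sequence generates precisely $\nu$ (not some coarser measure) — which requires that the oscillations at each stage equidistribute and that the finer stages do not destroy the statistics of the coarser ones — and (iii) the differential constraint $\Acal V_j = 0$ is preserved exactly at every stage, which forces each elementary jump to lie in a single $\ker\A(w)$ rather than a span of several. Point (iii) is where one genuinely uses $\Lambda = \R^{10}$; in a general constant-rank setting this is the heart of the matter, but here it is essentially free. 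The passage from atomic $\nu$ to general $\nu$ with finite $p$-th moment, and the measurable selection of cubes for a genuinely $x$-dependent family, are routine but need the moment bound to control $L^p$-equi-integrability of the tails; I would cite \cite{fon-muller} for the general statement and only sketch how the present (simpler) case fits, since the theorem is explicitly stated there to be a special case of a more general result.
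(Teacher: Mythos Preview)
The paper does not actually prove Theorem~\ref{converse}: it is stated without proof, with the remark preceding Theorem~\ref{YM1} that both results ``are special case of more general results contained in \cite{fon-muller}''. Your proposal is a faithful sketch of the Fonseca--M\"uller argument specialized to the present operator, and in that sense it goes \emph{beyond} what the paper does, since you outline the mechanism (reduction to homogeneous atomic measures, iterated $\Lambda$-laminations, equidistribution of oscillation scales) rather than merely invoking the reference. Your key observation---that $\Lambda=\R^{10}$ here, so that every jump $M_k-M_\ell$ lies in some $\ker\A(w)$ and the laminate construction has no directional obstruction---is exactly the point the paper makes in the paragraph computing $\Lambda$, and is what makes this case essentially trivial compared to the general constant-rank setting.

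One small caveat: your treatment of the boundary localization is a bit loose. You write that the cutoff error ``goes to zero in $W^{-1,p'}$ but can in fact be absorbed'' via potentials or compactly supported modifications. For the \emph{exact} constraint $\Acal V_j=0$ required by \eqref{problemA}, the Fonseca--M\"uller machinery uses a Fourier-multiplier projection onto $\ker\Acal$ for periodic fields (this is where the constant-rank condition enters), and then transfers to $\Om$ by a scaling/covering argument. Your alternative of working directly with potentials is viable here---indeed the paper does precisely this in Section~\ref{conv-integ-section} via the operator $\Lcal$ on skew-symmetric tensors---but you should be explicit that the plane-wave profiles you build can be written as $\Lcal(G)$ for piecewise linear $G$ vanishing on the boundaries of the small cubes, so that patching gives an exact global solution rather than one with residual errors.
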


\begin{corollary}\label{coroll2}
Let $V\in L^{p}(\Om;\R^{10})$. Suppose that $\Acal (V)=0$ and $V\in \Mcal^{c}$ a.e. 
Then there exists a sequence $\{V_{j}\}\subset L^{p}(\Om;\R^{10})$ satisfying 
\eqref{problemA} such that
\begin{equation*}
\dist(V_{j},\Mcal)\to 0 \text{ in } L^{p}(\Om) \text{ and } 
V_{j}\weak V \text{ in } L^{p}(\Om;\R^{10}) \,.
\end{equation*}  
\end{corollary}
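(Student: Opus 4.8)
The plan is to derive Corollary \ref{coroll2} from Theorem \ref{converse} by producing a weakly measurable family of probability measures supported on $\Mcal$ whose barycenter is $V$.

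\textbf{Step 1: the Young measure.} For a.e.\ $x$ we have $V(x)\in\Mcal^c=\mathrm{co}(\Mcal)$, hence $V(x)$ is a finite convex combination of points of $\Mcal$; the point is to choose one with controlled $p$-th moment. From the definition \eqref{BI-manifold} one reads off $|M|^2=2h^2-1$ for $M=(D,B,P,h)\in\Mcal$, while by Theorem \ref{convex-hull} every point of $\Mcal^c$ has $h\ge 1$; since $t\mapsto 2^{p/2}t^p$ is convex and increasing on $[1,\infty)$, the function $M\mapsto 2^{p/2}h^p$ is convex on $\Mcal^c$ and dominates $M\mapsto|M|^p$ on $\Mcal$. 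Consequently the convex envelope on $\Mcal^c$ of $M\mapsto|M|^p$ is at most $2^{p/2}h(\cdot)^p$, so for a.e.\ $x$ there exist, by Carathéodory's theorem applied to the lifted map $M\mapsto(M,|M|^p)\in\R^{11}$, coefficients $\lambda_1(x),\dots,\lambda_{12}(x)\ge 0$ with $\sum_k\lambda_k(x)=1$ and points $Z_1(x),\dots,Z_{12}(x)\in\Mcal$ such that
\begin{equation*}
\sum_{k=1}^{12}\lambda_k(x)Z_k(x)=V(x),\qquad \sum_{k=1}^{12}\lambda_k(x)|Z_k(x)|^p\le 2^{p/2}h(V(x))^p+1 .
\end{equation*}
The multifunction assigning to $x$ the set of all such $(\lambda,Z)$ has a Borel graph (because $\Mcal$ is closed and $V$ is measurable) with nonempty closed values, so by a standard measurable selection theorem the $\lambda_k$ and $Z_k$ may be taken measurable in $x$. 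Put $\nu_x:=\sum_{k=1}^{12}\lambda_k(x)\,\delta_{Z_k(x)}$. Then $\{\nu_x\}$ is a weakly measurable family of probability measures with $\supp\nu_x\subset\Mcal$, $\langle\nu_x,\mathrm{id}\rangle=V(x)$, and, using $h(V(x))\le|V(x)|$ and $V\in L^p$,
\begin{equation*}
\int_\Om\!\int_{\R^{10}}|M|^p\,d\nu_x(M)\,dx\le\int_\Om\big(2^{p/2}|V(x)|^p+1\big)\,dx<\infty .
\end{equation*}

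\textbf{Step 2: invoking Theorem \ref{converse}.} By hypothesis $\Acal(V)=0$, i.e.\ $\langle\nu_x,\mathrm{id}\rangle=V\in\ker\Acal$; together with the moment bound just obtained, the family $\{\nu_x\}$ satisfies the assumptions of Theorem \ref{converse}. This yields a sequence $\{V_j\}\subset L^p(\Om;\R^{10})$ satisfying \eqref{problemA} and generating $\{\nu_x\}$; moreover the construction can be arranged so that $\{|V_j|^p\}_j$ is equiintegrable.

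\textbf{Step 3: the two conclusions.} Since $\{V_j\}$ generates $\{\nu_x\}$, is $p$-equiintegrable, and has barycenter $x\mapsto\langle\nu_x,\mathrm{id}\rangle=V$, the fundamental theorem of Young measures gives $V_j\weak V$ in $L^p(\Om;\R^{10})$. For the distance: the map $M\mapsto\dist(M,\Mcal)^p$ is continuous, vanishes on $\Mcal\supset\supp\nu_x$, and satisfies $\dist(M,\Mcal)^p\le 2^{p-1}(1+|M|^p)$ (using that $(0,0,0,1)\in\Mcal$), so $\{\dist(V_j,\Mcal)^p\}_j$ is equiintegrable; applying the fundamental theorem of Young measures once more,
\begin{equation*}
\int_\Om\dist(V_j,\Mcal)^p\,dx\ \longrightarrow\ \int_\Om\!\int_{\R^{10}}\dist(M,\Mcal)^p\,d\nu_x(M)\,dx=0,
\end{equation*}
that is, $\dist(V_j,\Mcal)\to 0$ in $L^p(\Om)$.

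\textbf{Main obstacle.} The one nonroutine point is Step 1: the sequence furnished by Theorem \ref{converse} is only controlled through the $p$-th moment of $\{\nu_x\}$, so to get a genuine $L^p$ statement one must exhibit a \emph{measurable} family of representing measures on $\Mcal$ with finite total $p$-th moment over $\Om$. Measurability is routine (a selection theorem), but the moment bound is where the geometry of the Born--Infeld manifold enters, through the identity $|M|^2=2h^2-1$ on $\Mcal$ together with the convexity of $h\mapsto h^p$; everything after Step 1 is a standard application of Young measure theory.
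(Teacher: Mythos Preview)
Your argument is correct and is precisely the natural derivation of the corollary from Theorem~\ref{converse}; the paper itself does not supply a proof, merely pointing to \cite{fon-muller} for both the theorem and its corollaries, so you are filling in details the authors leave implicit. Two small remarks: (i) the $p$-equiintegrability of the generating sequence that you invoke in Step~2 is not part of the paper's bare statement of Theorem~\ref{converse}, but it is indeed part of the Fonseca--M\"uller result the paper is quoting, so the appeal is legitimate; (ii) the moment bound via $|M|^2=2h^2-1$ on $\Mcal$ and the convexity of $h\mapsto h^p$ is a clean observation specific to the Born--Infeld manifold---in the abstract $\Acal$-framework of \cite{fon-muller} one would instead truncate and diagonalize, but your route is shorter here.
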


\begin{remark}
{\rm
By suitably projecting the sequence $\{V_j\}$ 
provided by Corollary \ref{coroll2} onto $\Mcal$, one can obtain 
a sequence $\{\tilde V_{j}\}\subset L^{p}(\Om;\R^{10})$ satisfying \eqref{approx} 
such that 

\begin{align*}
& \tilde V_{j}\in \Mcal \text{ \rm a.e. } \text{ and }\,\, 
\tilde V_{j}\weak V \text{ in } L^{p}(\Om) \,.
\end{align*}
}
\end{remark}

\subsection{Approach by convex integration}\label{section-A}

\no
We now use the convex integration approach developed in Section \ref{conv-integ-section} 
to find maps which satisfy  the constraints \eqref{problemA} and \eqref{inclusion} exactly 
and have a prescribed average in the interior of the  convex hull $\Mcal^{c}$. Then Theorem   \ref{final-thm}
will follow easily by partitioning $\Omega$ into small subdomains and applying the result
to each subdomain. 


As above we write
\begin{equation*}
M=(M_{D},M_{B},M_{P},M_{h})\in
\R^{3}_{D}\times\R^{3}_{B}\times\R^{3}_{P}\times\R_{h}\,.
\end{equation*}
We look for maps 
\begin{equation*}
V :\Omega \subset \R^3 \to \R^{10} 
\end{equation*}
which satisfy the constraints $\div V_D = \div V_B =0$. 
Since we take the divergence of a matrix rowwise this constraint can be written in the
compact form
\begin{equation*}
\div  \binom{V_D^T}{V_P^T} = 0   \qquad \mbox{where   }   \binom{V_D^T}{V_B^T}  \in \Mtwo.
\end{equation*}
We first state the counterpart of Lemma \ref{basic} in the present setting. 
The operator $\Lcal$ is the same as in the previous section. As we work with $n=3$
we could identify $\Lcal$ with the rowwise curl operator of a matrix, but we refrain 
from doing so to keep the notation as close as possible to the previous section. 

\begin{lemma}\label{basic-A}
Let $M,N\in \R^{10}$ and let $F:=\t M+(1-\t)N$ for some $\t\in(0,1)$. 
Then for each $\delta>0$, there exists $V\in L^{\infty}(\Om;\R^{10})$ such that 

\begin{align}
\label{th1}&    \binom{V_D^T}{V_B^T}  =   \binom{F_D^T}{F_B^T}  + \Lcal(G)
    \text{ with } G \in (W^{1,\infty}(\Om;\Mtts))^2 
\text { and piecewise linear} \,,\\
\label{th2}& \|G\|_{L^{\infty}(\Om)}<\delta \,,  \\
   \label{th3}& G|_{\partial \Om}=0\, , \\
\label{th4}&\dist\big(V,\{M, N\}\big)
                  <\delta \,,\\
\label{th6}& \int_{\Om} V\, dx=F  |\Omega|\,.
\end{align}
\end{lemma}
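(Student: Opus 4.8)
The plan is to reduce the statement to the rank-one construction of Lemma \ref{basic} by exploiting that the divergence constraint only involves the $D$- and $B$-components, while the $P$- and $h$-components are \emph{free}. Write $M=(M_D,M_B,M_P,M_h)$ and $N=(N_D,N_B,N_P,N_h)$, and set $\tilde M=(M_D,M_B)$, $\tilde N=(N_D,N_B)\in\Mtwo$. Since $\tilde M-\tilde N$ is a $2\times 3$ matrix, its kernel is non-trivial, so there is $e\in S^2$ with $(\tilde M-\tilde N)e=0$; after a rotation we may assume $e=e_3$, so $\rank(\tilde M-\tilde N)\le 2=n-1$. Thus Lemma \ref{basic} applies to the pair $\tilde M,\tilde N$ in $\M^{2\times 3}$ with the given $\t$: it produces $G\in(W^{1,\infty}(\Om;\Mtts))^2$, piecewise linear, with $\|G\|_{L^\infty}<\delta$, $G|_{\partial\Om}=0$, and a divergence-free field $\binom{V_D^T}{V_B^T}=\binom{F_D^T}{F_B^T}+\Lcal(G)$ which lies within $\delta$ of $\{\tilde M,\tilde N\}$. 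In fact, inspecting the proof of Lemma \ref{basic}, the field $U=\chi A+(1-\chi)B$ takes \emph{exactly} the two values $\tilde M$ and $\tilde N$ on the two sub-slabs, and the perturbation is supported on the small diamond $\tilde\Om_\e$; so up to a set of small measure the field equals $\tilde M$ or $\tilde N$ exactly, with the fraction where it equals $\tilde M$ close to $\t$.

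Next I would define the $P$- and $h$-components by hand so as to match the $D,B$-pattern and get \eqref{th4} and \eqref{th6}. Let $E\subset\Om$ be the (open, up to null set) region where $\binom{V_D^T}{V_B^T}=\tilde M$, and set $V_P,V_h$ equal to $M_P,M_h$ on $E$ and to $N_P,N_h$ on $\Om\setminus E$, except on the small bad set where instead we put, say, $M_P,M_h$. Since no differential constraint is imposed on $V_P,V_h$, any choice is admissible. Then on the good set $V$ equals exactly $M$ or exactly $N$, giving $\dist(V,\{M,N\})<\delta$ once $\e$ (hence the bad set and the oscillation of $\Lcal(G)$) is small enough. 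For the average \eqref{th6}: by construction $\fint_\Om \binom{V_D^T}{V_B^T}=\binom{F_D^T}{F_B^T}$ exactly (the potential vanishes on $\partial\Om$, so $\fint\Lcal(G)=0$), and $\fint_\Om(V_P,V_h)$ is a convex combination $\mu(M_P,M_h)+(1-\mu)(N_P,N_h)$ with $\mu$ equal to $|E|/|\Om|$ plus a term of size $|\tilde\Om_\e|/|\Om|$; this need not equal $\t$ exactly. To fix the average on the nose I would, after fixing a small $\e$, adjust $\mu$ to equal $\t$ exactly by redistributing finitely many of the Vitali cells between the ``$M$-type'' and ``$N$-type'' assignments of $(V_P,V_h)$ — since the cells can be taken with arbitrarily small measure, $\mu$ ranges over a set dense in $[0,1]$ near its current value, and a slightly more careful bookkeeping (splitting one cell) makes $\mu=\t$ attainable exactly while keeping the bad set and hence \eqref{th4} unaffected. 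Alternatively, one can build the ``$P,h$-layering'' directly into the slab construction $\Om_\e=(-1,1)^{n-1}\times(0,\e)$ by using the same characteristic function $\chi$ (with cut at $x_n=\e\t$) for all ten components, so that $\fint_{\Om_\e}V=\t M+(1-\t)N=F$ on the slab exactly; the Vitali exhaustion then preserves this, and only the diamond-shaped correction $\tilde\Om_\e$ perturbs the average by an amount $O(\e)$ which one then compensates, or which one simply absorbs by noting the perturbation $Q$ has mean of order $\e$ and can be recentered.

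Concretely, the cleanest route is the second one: \emph{repeat verbatim the proof of Lemma \ref{basic}} but define all ten entries of the slab field by $\chi\,M+(1-\chi)N$, keep the skew-symmetric potential $P^k$ only for the rows $k=1,2$ corresponding to $(V_D,V_B)$, and leave $V_P,V_h$ as genuine (non-potential) functions. The modification $Q^k_n$, the set $\tilde\Om_\e$, the Vitali exhaustion, and the estimates \eqref{th2}, \eqref{th3} all carry over unchanged since they only concern the two potential-carrying rows. The only new point is the average: on each cell $\widetilde\Om_\e^i$ one has $\fint V = F + O(\e)$, where the $O(\e)$ error comes solely from the diamond correction on the $D,B$-rows (the $P,h$-rows have average exactly $\t M_{P,h}+(1-\t)N_{P,h}$ up to the measure of $\tilde\Om_\e$ inside the slab, again $O(\e)$). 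Since $\fint_\Om\Lcal(G)=0$ exactly (because $G\in W^{1,\infty}_0$, by \eqref{eq:extension}), the $D,B$-average is in fact exactly $\binom{F_D^T}{F_B^T}$; for the $P,h$-rows, choosing the cut value in the Vitali pieces to alternate so as to correct the $O(\e)$ discrepancy, or simply adding a constant $O(\e)$ shift to $V_P,V_h$ (which is allowed, no constraint) restores $\fint_\Om V=F$ exactly.

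The main obstacle is this last bookkeeping: getting the average \eqref{th6} to hold \emph{exactly} rather than approximately, because the diamond-shaped perturbation $\tilde\Om_\e$ and the discrepancy between the volume fraction of the sub-slab and $\t$ both introduce $O(\e)$ errors. This is handled because the $P,h$-components are unconstrained, so one has complete freedom to add an $O(\e)$ correction there (and symmetrically, the $D,B$-average is pinned exactly to $F_D,F_B$ by the vanishing boundary values of the potential, which is the whole point of using $W^{1,\infty}_0$). Everything else — the rank condition, the piecewise-linearity, the smallness of $G$, the boundary condition — is an immediate consequence of Lemma \ref{basic} applied in $\M^{2\times 3}$ to the pair $(\tilde M,\tilde N)$ after the rotation making $(\tilde M-\tilde N)e_3=0$.
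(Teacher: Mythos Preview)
Your strategy is the same as the paper's: apply Lemma~\ref{basic} to the pair
$A=\binom{M_D^T}{M_B^T}$, $B=\binom{N_D^T}{N_B^T}\in\Mtwo$ (automatically $\rank(A-B)\le 2$), and then fill in $(V_P,V_h)$ by hand since no differential constraint acts on those coordinates. The $D,B$--average is pinned to $(F_D,F_B)$ exactly by $G|_{\partial\Om}=0$, and the $(P,h)$--average is corrected by a constant shift. So the idea is right.

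Two remarks on execution. First, your reading of Lemma~\ref{basic} is slightly off: the field produced there is $\widetilde U=\Lcal(\widetilde P)$ on the diamond $\widetilde\Om_\e$, not $U=\chi A+(1-\chi)B$ on the slab; the perturbation $Q^k_n$ contributes gradients of size $O(\e)$ \emph{everywhere} on $\widetilde\Om_\e$, so the output is only $\delta'$--close to $\{A,B\}$, it does not equal $A$ or $B$ exactly on any set of positive measure. Hence your set $E=\{V_{D,B}=\tilde M\}$ is typically empty, and the ``redistribute Vitali cells / split one cell'' route does not get off the ground as written. Second, the paper avoids all of this bookkeeping by the clean device you mention only at the end: given the output of Lemma~\ref{basic} (with parameter $\delta'$), set
\[
\Om_A:=\Big\{x:\Big|\tbinom{V_D^T}{V_B^T}-A\Big|<\Big|\tbinom{V_D^T}{V_B^T}-B\Big|\Big\},\qquad \eta:=|\Om_A|/|\Om|,
\]
and define $(V_P,V_h):=\chi_{\Om_A}(M_P,M_h)+(1-\chi_{\Om_A})(N_P,N_h)+(\t-\eta)(M_P-N_P,M_h-N_h)$. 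The constant shift makes $\fint_\Om(V_P,V_h)=(F_P,F_h)$ exactly. The key estimate is then a one--line computation: integrating $\binom{V_D^T}{V_B^T}$ over $\Om$ and splitting over $\Om_A$ and its complement gives $|(\t-\eta)(A-B)|\le\delta'$, whence \eqref{th4} follows with $\delta'=\tfrac12\delta\,|A-B|/|M-N|$. This replaces your several alternative fixes by a single explicit formula.
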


\begin{proof}

By scaling we may assume without loss of generality $|\Omega| = 1$. 
We apply Lemma \ref{basic} with
\begin{equation}
A =  \binom{M_D^T}{M_B^T}, \quad B =  \binom{N_D^T}{N_B^T}
\end{equation}
and with $\delta'$ instead of $\delta$. Note that $A, B \in \Mtwo$ and hence
\begin{equation*}
\rank (A-B) \le 2 = n-1.
\end{equation*}
It follows that there exists a piecewise linear $G \in (W^{1,\infty}_0(\Om;\Mtts))^2$
such that \eqref{th1} and \eqref{th3} hold and
\begin{equation}  \label{eq:deltaprime}
\| G \|_{L^\infty(\Omega)} < \delta', \qquad \dist\left( \binom{V_D^T}{V_B^T}, \{A, B\} \right) < \delta'.
\end{equation}
Moreover
\begin{equation*}
\int_\Omega (V_D, V_B) = (F_D, F_B)
\end{equation*}
since $G= 0$ on $\partial \Omega$. 

It remains only to define $V_P$ and $V_h$. Since no differential constraint is imposed on these variables
this is easy. We set
\begin{equation*}
\Omega_A :=\left\{ x \in \Omega :  \left|   \binom{V_D^T}{V_B^T} - A \right| < \left|  \binom{V_D^T}{V_B^T} - B \right|  \right\}.
\end{equation*}
Denote by  $\chi_A$ be the characteristic function of $\Omega_A$ and define
\begin{equation*}
\eta := \int_\Omega \chi_A = |\Omega_A| \, .
\end{equation*}
We set
\begin{equation*}
(V_P, V_h) = \chi_A  (M_P, M_h) +  (1 - \chi_A) (N_P, N_h) +  (\theta - \eta)  (M_P- N_P, M_h - N_h).
\end{equation*}
Then  by the definition of $\eta$ (recall that $|\Omega| = 1$)
\begin{equation*}
\int_\Omega (V_P, V_h) \, dx = (F_P, F_h).
\end{equation*}
Moreover the definition of $\Omega_A$ and the second inequality in 
\eqref{eq:deltaprime} imply that
 \begin{equation} 
\label{eq:distV}
\dist( V, \{(M, N)\}) \le \delta' + |\theta - \eta|  \, \,  |(M_P- N_P, M_h - N_h)|.
\end{equation}
To estimate $\eta - \theta$ we note that
\begin{align*}
&  \binom{F_D^T}{F_B^T} -  \eta A - (1 -\eta) B = 
\int_\Omega   \binom{V_D^T}{V_B^T}  \, dx  -   \eta A - (1 -\eta) B  \non \\
 = & \int_{\Omega_A}  \binom{V_D^T}{V_B^T} - A  \, dx  + 
 \int_{\Omega \setminus \Omega_A}  \binom{V_D^T}{V_B^T} - B \, dx .
\end{align*}
Taking the norm on both sides and using  the definition of $\Omega_A$ and \eqref{eq:deltaprime} we see that
$|  (\theta - \eta) (A-B) |  \le \delta' $.
Now take 
\begin{equation*}
\delta' := \frac12 \delta \,  \frac{|A-B|}{|M-N|} \le \frac12 \delta.
\end{equation*}
Then  \eqref{th4} follows from \eqref{eq:distV}.
\end{proof}

Now we introduce the appropriate definition of in-approximation for the 
Born-Infeld set $\Mcal$ defined in \eqref{BI-manifold}.  
Note that while in Lemma \ref{basic} we had the constraint $\rank(A-B) \le n-1$, 
in Lemma \ref{basic-A} there is no constraint at all on the matrices $M$ and $N$. 
Thus the lamination convex hull introduced in the previous section is replaced by  the ordinary convex hull
and the in-approximation is defined using the convex hull. Note that by Caratheodory's theorem
the convex hull of any set $E \subset \R^{10}$ satisfies $E^c = \bigcup_{i=0}^{10} E^{i}$, 
where $E^0 = A$ and $E^{i+1}$ is inductively defined as the set  all convex combinations $\theta A + (1- \theta) B$,
with $A, B \in E^{i}$.  

\begin{definition}\label{A-inapproximation}
We say that a sequence of open sets 
$\{\U_{i}\}\subset\R^{10}$ 
is an in-approximation of $\Mcal$ if the following three conditions hold:

1. $\dsp \U_{i}\subset \U_{i+1}^{c}$\,;

2. the sets $\U_{i}$ are uniformly bounded;

3. if a sequence $F_i$ converges to $F$ as $i \to \infty$ and  $F_i \in \U_i$ for each $i$, then $F\in \Mcal$.

\end{definition}

Regarding the existence of  in-approximations with respect to ordinary convexity we have the following abstract result.

\begin{lemma}\label{covering} Let $M \subset \R^d$, assume that $\Int M^c \ne \emptyset$
and let $L\subset \Int \Mcal^c$ be compact. Then there exist $R> 0$ and open sets $\U_i$
such that 
\begin{enumerate}
\item[1.] $L \subset \U_1$
\item[2.] $\U_i \subset \U_{i+1}^c  \quad \forall \, i \ge 1$,
\item[3.] $\U_i \subset  B(0,R)   \quad \forall \, i \ge 1$,
\item[4.]  \label{item:inapp}  $\U_i  \subset  B_{1/i}(M)  \quad \forall \, i \ge 2$.
\end{enumerate}
In particular the sets $\U_i$ are an in-approximation of $M$. 
\end{lemma}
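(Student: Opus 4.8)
The plan is to construct the sets $\U_i$ explicitly as shrinking open neighbourhoods of $M$ that still contain $L$, exploiting the fact that both $L$ and $\partial(\Int M^c)$ are far apart and that convexity is ``stable'' under small enlargements. First I would fix, using compactness of $L$ and the hypothesis $L\subset\Int M^c$, a number $d_0>0$ with $B_{d_0}(L)\subset\Int M^c$, and set $R$ so large that $B(0,R)\supset B_1(M^c)\supset M^c$ (here I use that $M^c$ is bounded since $M$ is, which holds in our applications; if $M$ is unbounded one truncates, but in the Born--Infeld case $\Mcal$ is not bounded either, so the correct statement uses that $L$ is compact and one works inside a fixed large ball --- I would simply take $R$ with $L\subset B(0,R/2)$ and all later constructions confined to $B(0,R)$). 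The key quantitative input is a ``thickening'' lemma: if $C$ is convex and $0<\sigma<\tau$, then $B_\sigma(C)\subset \big(B_\tau(C)\big)^c$ in a strong sense --- in fact $B_\sigma(C)$ is itself convex, hence equals its own convex hull, and is contained in the open set $B_\tau(C)$. More useful is the interpolation: for $\U:=B_\sigma(M)$ and $\U':=B_{\sigma'}(M)$ with $\sigma<\sigma'$ one has $\U\subset \U'^c$ because $\U'^c\supset B_{\sigma'}(M)^c\supset B_{\sigma'}(M^c)\supset B_\sigma(M)=\U$, using the elementary inclusion $B_r(M)^c \supset B_r(M^c)$ (convex hull of an $r$-neighbourhood contains the $r$-neighbourhood of the convex hull, which follows by writing a convex combination of points in $B_r(M)$ as a convex combination of points in $M$ plus a vector of norm $\le r$).

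With this in hand the construction is: choose a decreasing sequence $\sigma_i\downarrow 0$ with $\sigma_1 = d_0$ (so that $L\subset B(0,R/2)\cap \Int M^c$ is contained in $\U_1:=B_{\sigma_1}(M)\cap B(0,R)$, once we also check $B_{d_0}(L)\subset B_{d_0}(M)$, i.e. $L\subset M$; if that fails because $L\subset \Int M^c$ rather than $L\subset M$, replace $\U_1$ by $\Int M^c\cap B(0,R)$, which is open, contains $L$, is contained in $M^c\subset (\U_2^c)$ provided $\U_2\supset M$, and is bounded) and with $\sigma_i<1/i$ for $i\ge 2$. Then define $\U_i := B_{\sigma_i}(M)\cap B(0,R)$ for $i\ge 2$. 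Property 1 holds by the choice of $\sigma_1$ (or by the $\Int M^c$ fallback); property 3 is immediate; property 4 holds since $\sigma_i<1/i$; and property 2 holds by the interpolation inclusion above together with $\U_{i+1}^c\supset \big(B_{\sigma_{i+1}}(M)\big)^c\supset B_{\sigma_{i+1}}(M)\supset B_{\sigma_i}(M)\cap B(0,R)=\U_i$ since $\sigma_i\ge\sigma_{i+1}$ --- one must also verify the first step $\U_1\subset\U_2^c$ separately, which is exactly where the $\Int M^c$ hypothesis is used, since $\U_2^c\supset M^c\supset \overline{\Int M^c}\supset \U_1$.

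Finally, to see that the $\U_i$ form an in-approximation of $M$ in the sense of Definition \ref{A-inapproximation}, conditions 1 and 2 there are properties 2 and 3 here, and condition 3 follows from property 4: if $F_i\in\U_i$ and $F_i\to F$, then $\dist(F_i,M)\le\sigma_i<1/i\to 0$, so $\dist(F,M)=0$, and since $M$ is closed (it is the closure of $\Int M^c$? no --- $M$ need not be closed in general, but in our application $\Mcal$ is a closed manifold, so one either assumes $M$ closed or replaces the conclusion $F\in M$ by $F\in\bar M$; I would state the lemma with $M$ closed, as is the case for $\Mcal$), we get $F\in M$.

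The main obstacle is the bookkeeping around whether $L\subset M$ or only $L\subset\Int M^c$, and correspondingly whether $\U_1$ can be taken as a neighbourhood of $M$ or must be taken as a chunk of $\Int M^c$: the ``seam'' $\U_1\subset\U_2^c$ is the one inclusion that is not purely a shrinking-neighbourhood argument and genuinely requires $L\subset\Int M^c$ together with $M\subset\U_2$. Everything else --- the convexity of $r$-neighbourhoods, the inclusion $B_r(M)^c\supset B_r(M^c)$, the verification of properties 3 and 4 --- is routine. A secondary point to be careful about is boundedness of $R$: one cannot take $R$ to dominate all of $M$ if $M$ is unbounded, so the radius $R$ must be chosen relative to $L$ (say $L\subset B(0,R-1)$) and all the $\U_i$ intersected with $B(0,R)$, at the cost of checking that this intersection does not destroy the convex-hull inclusion in property 2 --- which it does not, because $\U_i\subset B(0,R)$ and $B(0,R)$ is convex, so $\U_i^c\subset B(0,R)$ and the inclusions chase through on $B_{\sigma_i}(M)\cap B(0,R)$ directly.
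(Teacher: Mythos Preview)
Your approach has a genuine gap: the chain of inclusions you use to verify property 2 is written with the wrong direction, and once corrected, the inclusion simply fails. You take $\U_i=B_{\sigma_i}(M)\cap B(0,R)$ with $\sigma_i$ \emph{decreasing} and claim
\[
\U_{i+1}^c\supset \big(B_{\sigma_{i+1}}(M)\big)^c\supset B_{\sigma_{i+1}}(M)\supset B_{\sigma_i}(M)\cap B(0,R)=\U_i
\]
``since $\sigma_i\ge\sigma_{i+1}$''. But $\sigma_i\ge\sigma_{i+1}$ gives $B_{\sigma_i}(M)\supset B_{\sigma_{i+1}}(M)$, not the reverse; a smaller tube does not contain a larger one. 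What you actually need is $B_{\sigma_i}(M)\subset \big(B_{\sigma_{i+1}}(M)\big)^c=B_{\sigma_{i+1}}(M^c)$, i.e.\ every point within $\sigma_i$ of $M$ is within $\sigma_{i+1}<\sigma_i$ of $M^c$. This is false in general: take $M=\{e_1,-e_1\}\subset\R^2$, $\sigma_i=1$, $\sigma_{i+1}=\tfrac12$; the point $e_1+0.9\,e_2$ lies in $B_1(M)$ but its distance to $M^c=[-e_1,e_1]$ is $0.9>\tfrac12$. So shrinking metric neighbourhoods of $M$ cannot serve as an in-approximation: property 4 forces $\U_i$ to shrink towards $M$, while property 2 asks the larger set $\U_i$ to be in the convex hull of the smaller set $\U_{i+1}$, and tubes of $M$ do not have this ``telescoping from inside'' behaviour.

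The paper's proof avoids this by working with \emph{finite} sets and homotheties rather than metric neighbourhoods. One first replaces $L$ by a finite set $F_1$ of cube corners with $L\subset F_1^c$, and finds a finite $E\subset M$ with $F_1\subset\Int E^c$ (Carath\'eodory). The key step (Lemma~\ref{lem:finiteset}) is that for finite $F\subset\Int E^c$ one can set $F':=\bigcup_{q\in F}\big(q+(1-\delta)(E-q)\big)$: then $F\subset\Int(F')^c$, $F'\subset\Int E^c$, and $F'\subset B_\varepsilon(E)$ for $\delta$ small. Iterating gives finite sets $F_i$ with $F_i\subset\Int F_{i+1}^c$ and $F_i$ arbitrarily close to $E\subset M$, and one sets $\U_i=\Int F_{i+1}^c\cap B_{1/i}(E)$. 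The scaling trick is exactly what produces a set $F'$ that is simultaneously \emph{closer} to $E$ (hence to $M$) and has $F$ in the interior of its convex hull; metric thickening cannot do both at once.
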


Note that $\U_i  \subset  B_{1/i}(M)$ if and only if  $\dist(p, M) < \frac1i$ for all $p \in \U_i$.

To prove Lemma \ref{covering} we will inductively use the following elementary result for finite sets.

\begin{lemma} \label{lem:finiteset} Let $E \subset \R^d$ be a finite set,  let $F$ be a finite set
with 
\begin{equation} 
F \subset \Int E^c, 
\end{equation}
and let $\varepsilon > 0$.  Then there exists a finite set $F'$ such that
\begin{equation}
F \subset \Int (F')^c, \quad  F' \subset \Int E^c, \quad F' \subset B_\varepsilon(E).
\end{equation}
\end{lemma}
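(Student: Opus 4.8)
Since $F \subset \Int E^c$ and $F$ is finite, there is some $r > 0$ such that $B_{2r}(p) \subset \Int E^c = \Int(\mathrm{co}\,E)$ for every $p \in F$; indeed $\dist(p, \partial E^c) > 0$ for each $p$ and we take $r$ to be, say, one third of the minimum of these finitely many distances. The idea is to replace each $p \in F$ by finitely many points lying strictly inside $E^c$ and strictly closer to $E$ than some controlled amount, in such a way that $p$ itself lies in the interior of the convex hull of the new points. Concretely, for a fixed $p \in F$ I would choose an affine basis: points $p \pm r e_k$ for $k = 1, \dots, d$ (where $e_1, \dots, e_d$ is the standard basis of $\R^d$); these $2d$ points all lie in $B_{2r}(p) \subset \Int E^c$, and $p = \frac{1}{2d}\sum_{k}\big((p + r e_k) + (p - r e_k)\big)$ lies in the interior of their convex hull (a full-dimensional simplex/cross-polytope centered at $p$). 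Call this set of $2d$ points $S_p$, and set $F' := \bigcup_{p \in F} S_p$.

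It remains to verify the three required properties of $F'$. First, $F' \subset \Int E^c$ is immediate since each $S_p \subset B_{2r}(p) \subset \Int E^c$. Second, $F \subset \Int (F')^c$: for each $p \in F$ we have $p \in \Int(\mathrm{co}\, S_p) \subset \Int(\mathrm{co}\, F') = \Int (F')^c$, using $S_p \subset F'$ and monotonicity of the convex hull. Third, $F' \subset B_\varepsilon(E)$: here I would additionally shrink $r$ at the outset so that $r < \varepsilon$ wherever needed, but note that this does \emph{not} by itself give $F' \subset B_\varepsilon(E)$, because the points of $F$ need not be anywhere near $E$.

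**This last point is the genuine content of the lemma and where the main work lies.** The resolution is that the perturbation must not merely be small but must move the points \emph{toward} $E$. So instead of a symmetric cross-polytope around $p$, I would first pick, for each $p \in F$, a nearest point $q_p \in \mathrm{co}\, E$ to... no — rather, since $p \in \Int(\mathrm{co}\, E)$ already, one should argue by a scaling/contraction of $\mathrm{co}\,E$: fix any point $x_0 \in \Int(\mathrm{co}\, E)$, and for $t \in (0,1)$ consider the contracted set $E_t := x_0 + t(E - x_0)$, whose convex hull is $x_0 + t(\mathrm{co}\, E - x_0)$. For $t$ close to $1$ this contracted hull still contains the finite set $F$ in its interior (again using finiteness of $F$ and openness of $\Int \mathrm{co}\, E$), while $E_t \subset B_{\varepsilon}(E)$ once $t$ is close enough to $1$, since $E$ is bounded. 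Now apply the cross-polytope construction around each $p$ but with the vertices taken inside the \emph{contracted} open set $\Int(\mathrm{co}\, E_t)$: choose $r$ small enough that $B_{2r}(p) \subset \Int(\mathrm{co}\, E_t)$ for all $p \in F$, set $S_p := \{p \pm r e_k : 1 \le k \le d\}$ and $F' := \bigcup_p S_p$. Then $F' \subset \Int(\mathrm{co}\, E_t) \subset \Int E^c$ and $F' \subset \overline{\mathrm{co}\, E_t} = \mathrm{co}\, E_t \subset B_\varepsilon(E)$ (using that $\mathrm{co}\, E_t$ is the contraction of $\mathrm{co}\, E$ and $E$ is bounded, so points of $\mathrm{co}\, E_t$ are within distance $(1-t)\,\mathrm{diam}(\mathrm{co}\,E)$ of $x_0 \in \mathrm{co}\, E \subset B_\varepsilon(E)$ — choose $t$ accordingly), while $F \subset \Int(F')^c$ as before. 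This completes the proof.
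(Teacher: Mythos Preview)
There is a genuine gap in the final step. Your claim that $\mathrm{co}\,E_t \subset B_\varepsilon(E)$ is false in general: $B_\varepsilon(E)$ is the $\varepsilon$-neighborhood of the \emph{finite} set $E$, i.e.\ $\bigcup_{e\in E} B_\varepsilon(e)$, whereas $\mathrm{co}\,E_t$ is a full-dimensional convex body (for $t$ close to $1$ it is nearly all of $\mathrm{co}\,E$). For example, in $\R^1$ with $E=\{-1,1\}$ and $x_0=0$, one has $\mathrm{co}\,E_t=[-t,t]$, but $0\notin B_\varepsilon(E)$ once $\varepsilon<1$. The justification you give is also internally inconsistent: for $t$ near $1$ the points of $\mathrm{co}\,E_t$ are \emph{not} close to $x_0$; and the chain ``$x_0\in \mathrm{co}\,E\subset B_\varepsilon(E)$'' is itself wrong for the same reason. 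Since your $F'$ consists of cross-polytope vertices $p\pm r e_k$ near the points of $F$, and $F$ may lie deep inside $\mathrm{co}\,E$ and far from every vertex of $E$, there is no reason your $F'$ satisfies $F'\subset B_\varepsilon(E)$.

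The remedy is to drop the cross-polytope entirely and take $F':=E_t$ itself. Then each point $x_0+t(e-x_0)$ of $F'$ satisfies $|x_0+t(e-x_0)-e|=(1-t)|e-x_0|$, which is $<\varepsilon$ for $t$ close enough to $1$, giving $F'\subset B_\varepsilon(E)$; the standard fact that $[x_0,e)\subset \Int(\mathrm{co}\,E)$ whenever $x_0\in\Int(\mathrm{co}\,E)$ and $e\in \mathrm{co}\,E$ gives $F'\subset \Int E^c$; and since $\Int(\mathrm{co}\,E_t)=x_0+t(\Int(\mathrm{co}\,E)-x_0)$ increases to $\Int(\mathrm{co}\,E)$ as $t\uparrow 1$, the finite set $F$ is contained in $\Int(F')^c$ for $t$ close enough to $1$. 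This is essentially the paper's construction: the paper also takes contracted copies of $E$, but uses a separate contraction center for each $q\in F$ (namely $q$ itself), setting $F'_q=q+(1-\delta)(E-q)$ and $F'=\bigcup_q F'_q$; a single center $x_0$ works just as well.
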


\begin{proof} Step 1. Assume that $F = \{0\}$. \\
By assumption there exists $\eta > 0$ such that
\begin{equation*}
B_\eta(0) \subset E^c.
\end{equation*}
Let $\delta \in (0,1)$ and set $F' = (1-\delta) E$. 
Then $(F')^c \supset B_{(1- \delta) \eta}(0)$ and hence
$ 0 \in \Int (F')^c$. Since $E^c$ is convex we also have
for every $p \in E$ the inclusion $\delta B_\eta(0) + (1-\delta) p \subset E^c$.
Thus $F' \subset \Int E^c$.

Finally if $\delta < \varepsilon/ \max \{ |p| : p \in E \}$ we have $F' \subset B_\varepsilon(E)$.

Step 2. General finite $F$.\\
Let
\begin{equation*}
\delta < \frac{\varepsilon}{\max \{ |q - p| : q \in F, p \in E \}}.
\end{equation*}
For $q \in F$ define
\begin{equation*}
F'_q :=  q + (1-\delta) (-q + E).
\end{equation*}
By Step 1
\begin{equation*}
q \in \Int (F'_q)^c, \quad  F'_q \subset \Int E^c, \quad F'_q \subset B_\varepsilon(E).
\end{equation*}
Thus $F' := \bigcup_{q \in F} F'_q$ has the desired properties.
\end{proof}

\begin{proof}[Proof of Lemma \ref{covering}]
For each $q \in L$ there exists a $\delta(q) > 0$ such that
the cube $q + (-3 \delta, 3 \delta)^d$ is contained in $M^c$.
Since $L$ is compact there exist $q_1, \ldots, q_m \in L$ such that
\begin{equation*}
L \subset  \bigcup_{i=1}^m  q_i + (-\delta_i, \delta_i)^d , \quad  \mbox{and}  \quad
q_i + (-3\delta_i, 3\delta_i)^d\subset M^c.
\end{equation*}
Let $F_1$ be the set of all the corner points of all the cubes
$q_i + [- \delta_i,  \delta_i]^d$. Then
\begin{equation}  \label{eq:F1K}
L \subset F_1^c.
\end{equation}
We now show that there exists a finite set $E \subset M$ such that
$F_1 \subset \Int E^c$.  Indeed, let $G_1$ denote the set of all
the corner points of  the cubes $q_i + [- 2 \delta_i, 2 \delta_i]^d$.
By Caratheodory's theorem each point in $G_1$ is a convex combination 
of at most $d+1$ points in $M$. Thus there exists a finite set $E \subset M$
such that $G_1 \subset E^c$. 
This implies that
\begin{equation*}
F_1 \subset \Int G_1^c \subset \Int E^c.
\end{equation*}
Set
\begin{equation*}
R := \max \{  |p| : p \in E \}.
\end{equation*}

Inductive application of Lemma \ref{lem:finiteset}
yields finite sets $F_i$ with $F_i \subset \Int E^c$ for all $i \ge 1$ and
\begin{equation}  \label{eq:in_finite}
F_i  \subset \Int F_{i+1}^c \quad \forall i \ge  1, \qquad F_i \subset B_{\frac1i}(E) \quad \forall i \ge 2. 
\end{equation}
Moreover the condition $F_i \subset \Int E^c$ implies that
\begin{equation*}
F_i \subset B(0,R) \quad \forall i \ge 1.
\end{equation*}
Now define
\begin{align*}
\U_1 &:= \Int F_{2}^c, \\
\U_i &:= \Int F_{i+1}^c \cap B_{\frac1i}(E) \quad \forall i \ge 2.
\end{align*}
Then the sets $\U_i$ are open and $\U_i \subset B(0,R)$ since 
$F_{i+1} \subset B(0,R)$.
Moreover by \eqref{eq:in_finite} we have $F_i \subset \U_i$ for all $i \ge 1$ 
and thus 
\begin{equation*}
\U_i   \subset F_{i+1}^c \subset \U_{i+1}^c.
\end{equation*}

Since $\U_1$ is convex the inclusion  $F_1 \subset \U_1$ and \eqref{eq:F1K}
imply that $ \U_1 \supset F_1^c \supset L$.
This finishes the proof of Lemma \ref{covering}.
\end{proof}

\bigskip 
%
%
%
%

For future reference we also note the following observation.

\begin{lemma} \label{lem:convergencePh}
For every $\delta > 0$ and every $R > 0$ there exists an $\eta > 0$
such that the estimates
\begin{equation}
|(D,B,P,h)| \le R \quad \mbox{and} \quad \dist((D,B,P,h), \Mcal) < \eta
\end{equation}
imply that
\begin{equation}
|P - D \wedge B| < \delta \quad \mbox{and} \quad   |h - \sqrt{1 + |D|^2 + |B|^2 + |P|^2} | < \delta.
\end{equation}
\end{lemma}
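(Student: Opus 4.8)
Lemma \ref{lem:convergencePh} is a straightforward compactness statement, so the plan is to argue by contradiction using sequential compactness. Suppose the conclusion fails for some fixed $\delta > 0$ and $R > 0$. Then for every $\eta > 0$ — in particular for $\eta = 1/k$, $k \in \N$ — there exists a point $(D_k, B_k, P_k, h_k)$ with $|(D_k,B_k,P_k,h_k)| \le R$ and $\dist((D_k,B_k,P_k,h_k), \Mcal) < 1/k$, but such that either $|P_k - D_k \wedge B_k| \ge \delta$ or $|h_k - \sqrt{1 + |D_k|^2 + |B_k|^2 + |P_k|^2}| \ge \delta$.

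Since the sequence $(D_k, B_k, P_k, h_k)$ lies in the ball $\bar B(0,R) \subset \R^{10}$, which is compact, after passing to a subsequence (not relabeled) it converges to some limit $(D,B,P,h)$ with $|(D,B,P,h)| \le R$. From $\dist((D_k,B_k,P_k,h_k), \Mcal) < 1/k \to 0$ and the fact that $\Mcal$ is closed (it is defined in \eqref{BI-manifold} by the continuous equations $P = D \wedge B$ and $h = \sqrt{1 + |D|^2 + |B|^2 + |P|^2}$), we conclude that the limit satisfies $(D,B,P,h) \in \Mcal$, i.e.\ $P = D \wedge B$ and $h = \sqrt{1 + |D|^2 + |B|^2 + |P|^2}$. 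On the other hand, the maps $(D,B,P,h) \mapsto |P - D \wedge B|$ and $(D,B,P,h) \mapsto |h - \sqrt{1 + |D|^2 + |B|^2 + |P|^2}|$ are continuous, so passing to the limit in the inequality that holds along the subsequence gives that at least one of $|P - D \wedge B| \ge \delta$ or $|h - \sqrt{1 + |D|^2 + |B|^2 + |P|^2}| \ge \delta$ holds for the limit point. This contradicts $(D,B,P,h) \in \Mcal$, which forces both quantities to vanish.

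There is essentially no obstacle here: the only points to be slightly careful about are that the closedness of $\Mcal$ is what upgrades ``distance to $\Mcal$ tends to zero'' into ``the limit lies in $\Mcal$'', and that one needs to extract a single subsequence along which the \emph{same} one of the two alternatives occurs (this is automatic since there are only two cases, so one of them recurs infinitely often). The quantitative dependence of $\eta$ on $\delta$ and $R$ is not made explicit by this argument, but the statement only asserts existence of such an $\eta$, so the compactness proof suffices.
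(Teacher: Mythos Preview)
Your proof is correct and follows essentially the same compactness-by-contradiction argument as the paper's own proof. The only cosmetic difference is that the paper combines the two failure conditions into a single inequality $|P_k - D_k \wedge B_k| + |h_k - \sqrt{1 + |D_k|^2 + |B_k|^2 + |P_k|^2}| \ge \delta_0$, avoiding the need to pass to a further subsequence where one alternative recurs; your handling of this via pigeonhole is equally valid.
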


\begin{proof} This just follows from the continuity of the functions involved and compactness. 
Indeed, if the assertion is false then there exist $R_0 > 0$ and $\delta_0 > 0$ such that
for each $k \in \N$, $k\ge 1$ there exist $(D_k, B_k, P_k, h_k) \in \bar B(0, R_0)$ such that
\begin{equation*} 
\dist( (D_k, B_k, P_k, h_k) , \Mcal) \le \frac1k
\end{equation*}
and 
\begin{equation} \label{eq:convPh}
|P_k - D_k \wedge B_k| +   |h_k - \sqrt{1 + |D_k|^2 + |B_k|^2 + |P_k|^2} | \ge \delta_0.
\end{equation}
There exists a subsequence such that 
$(D_{k_j}, B_{k_j}, P_{k_j}, h_{k_j})  \to (D,B,P,h)$ and $(D,B,P,h) \in \Mcal$.
Passage to the limit in \eqref{eq:convPh}  along this subsequence yields
\begin{equation}
|P - D \wedge P| +  |h - \sqrt{1 + |D|^2 + |B|^2 + |P|^2} | \ge \delta_0,
\end{equation}
but this contradicts the definition of $\Mcal$. 
\end{proof}


We can now construct solutions of the problem 
$V \in \Mcal$ and $\div V_D = \div V_B = 0$
in complete analogy with the argument in the previous section.

\begin{lemma}\label{lemma-km}
Let $U \subset \R^d$ be open and bounded. 
Suppose that
$F\in L^{\infty}(\Om;\R^{10})$  is  a piecewise constant function which satisfies 
\begin{align*}
& 
\div F_D = \div F_B = 0 \text { in }\dpr(\Om)\,,\\
& F\in U^c \,\text{ \rm a.e. } 
\end{align*}

\no 
Then, for each $\delta>0$, there exists  $\Vd\in L^{\infty}(\Om;\R^{10})$ such that 
\begin{align}
& \binom{(\Vd)_D^T}{(\Vd)_B^T} = F + \Lcal(G) 
\text{  with } G \in (W^{1, \infty}(\Om; \Mtts))^2
\text { and piecewise linear} \,,\\
& \|G\|_{L^{\infty}(\Om)}<\delta \,   , \\
& G|_{\partial \Om}=0\,, 
 \\
& \Vd\in U\quad {\rm a.e.}\,, \\
& \int_{\Om}\Vd\,dx=  \int_{\Om} F \, dx \,.
\end{align}
\end{lemma}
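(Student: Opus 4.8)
The plan is to mimic exactly the induction-on-convex-hull-level argument used in the proof of Lemma \ref{lemma-k-open}, replacing the basic building block Lemma \ref{basic} by its counterpart Lemma \ref{basic-A}, and replacing lamination convexity (rank-$\le n-1$ segments) by ordinary convexity. First I reduce to the case that $F$ is constant: since $F$ is piecewise constant there are disjoint open Lipschitz sets $\Om_k$ covering $\Om$ up to a null set with $F \equiv F_k$ on $\Om_k$, and $F_k \in U^c$; if the constant case is settled I apply it on each $\Om_k$, extend the resulting potentials $G^k$ by zero using \eqref{eq:extension}, and sum. Note that on each $\Om_k$ the constraint $\div F_D = \div F_B = 0$ is automatic since $F_k$ is constant, so no compatibility issue arises when restricting.

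For constant $F \in U^c$, by Carathéodory's theorem (as recalled just before Definition \ref{A-inapproximation}) we have $F \in U^i$ for some $i \ge 0$, and I argue by induction on $i$. If $i = 0$ then $F \in U$ and I simply take $G = 0$, $\Vd = F$ (all four conclusions are trivial; here I may need $U$ open only to know $F$ is an interior point, which is not actually required for this base step — $F \in U$ suffices). Assume the statement holds for all levels $\le j$ and let $F \in U^{j+1}$, so $F = \t M + (1-\t) N$ with $M, N \in U^j$ and $\t \in (0,1)$, \emph{with no rank restriction on $M - N$}. Apply Lemma \ref{basic-A} with this $M, N, F, \t$ and a small parameter $\delta_1 > 0$ to obtain a piecewise linear $G \in (W^{1,\infty}_0(\Om;\Mtts))^2$ with $\|G\|_{L^\infty} < \delta_1/2$, $G|_{\partial\Om} = 0$, $\binom{V_D^T}{V_B^T} = \binom{F_D^T}{F_B^T} + \Lcal(G)$, $\dist(V, \{M,N\}) < \delta_1$, and $\int_\Om V = F|\Om|$. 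Since $U^j$ is open (Remark \ref{rem-K^{L}} has an exact analogue for the ordinary convex hull of an open set — the sets $U^i$ are open) and $M, N \in U^j$, for $\delta_1$ small enough the function $W := V$ satisfies $W \in U^j$ a.e. Then $W$ is piecewise constant, $W = \sum_h \chi_{\Om_h} C_h$ with $C_h \in U^j$ and $\Om_h$ disjoint open Lipschitz subsets of $\Om$ exhausting it; moreover $\binom{(W)_D^T}{(W)_B^T}$ is piecewise constant so each $C_h$ automatically has divergence-free $D,B$ components on $\Om_h$.

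Now apply the induction hypothesis (level $\le j$) on each $\Om_h$ to $C_h \in U^j$, with parameter $\delta_1/2$, obtaining piecewise linear $G_h \in (W^{1,\infty}(\Om_h;\Mtts))^2$ with $\binom{(\Vd^h)_D^T}{(\Vd^h)_B^T} = (C_h)|_{\Om_h} + \Lcal(G_h)$, $\Vd^h \in U$ a.e. in $\Om_h$, $\|G_h\|_{L^\infty(\Om_h)} < \delta_1/2$, $G_h|_{\partial\Om_h} = 0$, and $\int_{\Om_h} \Vd^h = \int_{\Om_h} C_h$. Set $\Gd := G + \sum_h \chi_{\Om_h} G_h$ (the zero-extensions are legitimate by \eqref{eq:extension}), and $\Vd := \binom{\cdot}{\cdot}$-consistently the glued map whose $D,B$ part is $F + \Lcal(\Gd)$ and whose $P,h$ part is $\sum_h \chi_{\Om_h} (\Vd^h)_{P,h}$. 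Then $\|\Gd\|_{L^\infty(\Om)} < \delta_1$, $\Gd|_{\partial\Om} = 0$, $\Vd \in U$ a.e., and the average is preserved since $\int_{\Om_h}\Vd^h = \int_{\Om_h} W$ on each piece and $\int_\Om W = F|\Om| = \int_\Om F$. Choosing $\delta_1 \le \delta$ at the outset gives the claim. The one point needing care — and the main obstacle — is verifying that the level sets $U^i$ of the ordinary convex hull of the open bounded set $U$ are again open; this is the exact analogue of the statement in Remark \ref{rem-K^{L}} and follows because the map $(\t, A, B) \mapsto \t A + (1-\t) B$ is open on $(0,1) \times U^{i-1} \times U^{i-1}$, together with the Carathéodory description $U^c = \bigcup_{i=0}^{10} U^i$. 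Everything else is the routine gluing bookkeeping already carried out in Lemma \ref{lemma-k-open}.
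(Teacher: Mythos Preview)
Your proposal is correct and follows exactly the approach indicated in the paper, whose proof consists of the single sentence ``Lemma \ref{lemma-km} follows by induction from Lemma \ref{basic-A} exactly in the same way as Lemma \ref{lemma-k-open} was deduced from Lemma \ref{basic}.'' You have carefully spelled out precisely this argument, including the additional bookkeeping (absent in Lemma \ref{lemma-k-open}) needed to define the $P,h$ components of $V_\delta$ and to propagate the average-preservation condition $\int_\Om V_\delta = \int_\Om F$ through the induction, and you have correctly flagged that openness of the successive hulls $U^i$ is the one point requiring verification.
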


%
%
%

\begin{proof} 
Lemma \ref{lemma-km} follows by induction from Lemma \ref{basic-A} exactly in the same
way as  Lemma \ref{lemma-k-open} was deduced  from Lemma \ref{basic}.
\end{proof}


\begin{theorem}\label{thm99}
Let $L$ be a compact subset of $\Int\Mcal^c$. Then there exists an $R > 0$ such that for all $F \in L$
there exists $V \in L^{\infty}(\Om;\R^{10})$
such that 
\begin{align}
&  \label{eq:finaldiv} \binom{V_D^T}{V_B^T} =  \binom{F_D^T}{F_B^T} + \Lcal(H) 
\text{  with } H \in (W^{1, \infty}(\Om; \Mtts))^2\\
& \|H\|_{L^{\infty}(\Om)}<  1   
 \,, \\
& H_{|\partial \Om}=0\,, \\
& V \in \Mcal \text{ {\rm a.e.}}  \quad \mbox{and}   \quad  \|V \|_{L^\infty} \le R \,,\\
& \int_{\Om} V  \,dx=F\,.
\end{align}
\end{theorem}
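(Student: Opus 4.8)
The plan is to combine Lemma~\ref{covering} (existence of an in-approximation of $\Mcal$ using the \emph{ordinary} convex hull) with an iteration scheme based on Lemma~\ref{lemma-km}, in exact analogy with the proof of Theorem~\ref{mainthm}. First, since $L$ is a compact subset of $\Int\Mcal^c$, Lemma~\ref{covering} (applied with $M=\Mcal$, $d=10$) yields an $R>0$ and a sequence of open, uniformly bounded sets $\U_i\subset B(0,R)$ with $L\subset\U_1$, $\U_i\subset\U_{i+1}^c$, and $\U_i\subset B_{1/i}(\Mcal)$ for $i\ge2$; in particular $\{\U_i\}$ is an in-approximation of $\Mcal$ in the sense of Definition~\ref{A-inapproximation}. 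Fix $F\in L\subset\U_1$. Set $V_1:=F$, $H_1:=0$; since $F\in\U_1\subset\U_2^c$ and $F$ is piecewise constant with $\div F_D=\div F_B=0$, Lemma~\ref{lemma-km} produces a piecewise constant $V_2$ with $V_2\in\U_2$ a.e., $\binom{(V_2)_D^T}{(V_2)_B^T}=\binom{F_D^T}{F_B^T}+\Lcal(G_2)$, $G_2$ piecewise linear, $G_2|_{\partial\Om}=0$, $\|G_2\|_{L^\infty}<\delta_2$, and $\int_\Om V_2=\int_\Om F$. Set $H_2:=G_2$.

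The inductive step mirrors the proof of Theorem~\ref{mainthm} verbatim: given the piecewise constant $V_i=(V_i)$ with $V_i\in\U_i$ a.e., choose $\Om_i:=\{x:\dist(x,\partial\Om)>2^{-i}\}$ and a mollification scale $\e_i\in(0,2^{-i})$ so that $\|\varro_{\e_i}*\Lcal(H_i)-\Lcal(H_i)\|_{L^1(\Om_i)}<2^{-i}$, put $\delta_{i+1}:=\delta_i\e_i$, and apply Lemma~\ref{lemma-km} (with $U=\U_{i+1}$, and noting $V_i\in\U_i\subset\U_{i+1}^c$) to get a piecewise linear $G_{i+1}\in(W^{1,\infty}(\Om;\Mtts))^2$ with $G_{i+1}|_{\partial\Om}=0$, $\|G_{i+1}\|_{L^\infty}<\delta_{i+1}$, $\binom{(V_{i+1})_D^T}{(V_{i+1})_B^T}=\binom{(V_i)_D^T}{(V_i)_B^T}+\Lcal(G_{i+1})$, $V_{i+1}\in\U_{i+1}$ a.e., and $\int_\Om V_{i+1}=\int_\Om V_i$. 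The divergence-free $D,B$-components are built from $H_{i+1}:=\sum_{j=2}^{i+1}G_j$ exactly as before; the $P,h$-components are adjusted inside Lemma~\ref{basic-A}/\ref{lemma-km} so that the average is preserved. Since $\delta_1:=1$ and one arranges $\sum_{i\ge2}\delta_i<1$, the $H_i$ are uniformly bounded in $W^{1,\infty}$ and converge uniformly to some $H:=H_\infty$ with $\|H\|_{L^\infty}<1$ and $H|_{\partial\Om}=0$, and $\Lcal(H_i)\weakst\Lcal(H_\infty)$ in $L^\infty$-weak$*$.

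To upgrade weak$*$ to a.e.\ convergence — and thereby conclude $V\in\Mcal$ a.e.\ — I would reproduce the strong-$L^1$ argument of Theorem~\ref{mainthm}: the choice $\delta_{i+1}=\delta_i\e_i$ together with $\|\nabla\varro_{\e_i}\|_{L^1}\le C/\e_i$ and the Cauchy estimate $\|H_i-H_\infty\|_{L^\infty}\le\sum_{k>i}\delta_k\le2\delta_{i+1}$ gives $\|\varro_{\e_i}*(\Lcal(H_i)-\Lcal(H_\infty))\|_{L^1(\Om_i)}\le C'\delta_i\to0$; combined with the mollification estimate on $\Om_i$, the continuity of $\varro_{\e_i}*\Lcal(H_\infty)\to\Lcal(H_\infty)$ in $L^1$, the uniform bound, and $|\Om\setminus\Om_i|\to0$, one obtains $\Lcal(H_i)\to\Lcal(H_\infty)$ in $L^1(\Om)$, hence $V_i\to V:=(V)$ in $L^1$ up to a subsequence a.e. By condition~3 of the in-approximation, $V\in\Mcal$ a.e.; the uniform bound $\U_i\subset B(0,R)$ passes to the limit to give $\|V\|_{L^\infty}\le R$. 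Finally, since $\int_\Om V_{i+1}=\int_\Om V_i=\int_\Om F$ for all $i$ and $V_i\to V$ in $L^1$, we get $\int_\Om V=\int_\Om F=F$ (recall the normalization $|\Om|=1$, or keep the factor $|\Om|$ throughout). The main obstacle is purely bookkeeping: ensuring the average constraint $\int_\Om V_i=\int_\Om F$ is propagated at every step — this is exactly what the $(\theta-\eta)$-correction in the $P,h$-components of Lemma~\ref{basic-A} is designed to handle, and it carries through the induction in Lemma~\ref{lemma-km} and the limit without difficulty. No genuinely new idea beyond Section~\ref{conv-integ-section} is needed.
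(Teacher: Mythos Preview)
Your iteration scheme and the strong-$L^1$ convergence of $\Lcal(H_i)$ are set up exactly as in the paper. However, there is a genuine gap at the point where you write ``hence $V_i\to V$ in $L^1$''. The potentials $H_i$ only encode the $D$ and $B$ components: from $\Lcal(H_i)\to\Lcal(H_\infty)$ in $L^1$ you obtain $(V_i)_D\to V_D$ and $(V_i)_B\to V_B$ strongly, but you have said nothing about the convergence of $(V_i)_P$ and $(V_i)_h$. These components are modified freely at each step of Lemma~\ref{lemma-km} (with no potential controlling them), and a uniform $L^\infty$ bound alone does not yield an a.e.\ convergent subsequence. Without a.e.\ convergence of the \emph{full} vector $V_i(x)$ you cannot invoke condition~3 of the in-approximation to conclude $V\in\Mcal$, nor can you define the limiting $V_P,V_h$.

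The paper closes this gap with Lemma~\ref{lem:convergencePh}: since $V_i\in\U_i\subset B_{1/i}(\Mcal)$ and $|V_i|\le R$, one has $\|(V_i)_P-(V_i)_D\wedge(V_i)_B\|_{L^\infty}\to0$ and the analogous statement for $h$. Combined with the strong convergence of $(V_i)_D,(V_i)_B$ this forces $(V_i)_P\to V_D\wedge V_B$ and $(V_i)_h\to\sqrt{1+|V_D|^2+|V_B|^2+|V_D\wedge V_B|^2}$ in $L^p$, so the full sequence $V_i$ converges strongly and the limit automatically lies in $\Mcal$. This is the one genuinely new ingredient beyond Section~\ref{conv-integ-section}; once you insert it, your argument is complete and matches the paper's.
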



\begin{proof}
By Lemma \ref{covering} there exists  an in-approximation $\U_i$ with 
$L \subset \U_1$.
Arguing exactly as  in the proof of  Theorem \ref{mainthm} and using now
Lemma \ref{lemma-km} instead of Lemma \ref{lemma-k-open} we can inductively define $\delta_i$, $H_i$ and $\e_i$ such that
\begin{align*}
&\binom{(V_i)_D^T}{(V_i)_B^T}  = \binom{F_D^T}{F_B^T}  + \Lcal(H_i)  \\
&H_i \in (W^{1, \infty}(\Omega, \Mtts))^2 \quad \text{and piecewise linear }\\
&H_i{|\partial \Omega = 0} \\
& V_i \in \U_i \text{  a.e.}\\
&\int_\Om  V_i \, dx = F \, |\Omega|\\
&\| \rho_{\e_i} \ast \Lcal(H_i)  - \Lcal(H_i) \|_{L^1(\Omega_i)}  < \frac{1}{2^i}\\
&\delta_{i+1} = \delta_i \e_i \\
&\| H_{i+1} - H_i \|_{L^\infty(\Om)} < \delta_{i+1}.
\end{align*}
Then we get   $\Lcal(H_i) \weakst \Lcal(H)$ in $L^\infty(\Om)$, $\|H\| < \delta \le 1$ and 
$\Lcal(H_i) \to  \Lcal(H)$  in $L^1(\Om)$. This implies that
\begin{equation}
(V_i)_D \to V_D,  \quad (V_i)_B \to V_B \qquad \text{in $L^p(\Omega)$ for all $p < \infty$}.
\end{equation}
%
%
%
%
%
%
Thus
\begin{equation*}
(V_{i})_{D}\wedge (V_{i})_{B}
\to V_{D}\wedge V_{B} \,
\text{ \rm{ in}  $L^{p}(\Om)$ for all $p < \infty$}  \,.
\end{equation*}


%
\no
By the construction of the in-approximation we have
$\| \dist(V_{i},  \Mcal) \|_{L^\infty} \le \frac1i$ and $\|V_i\|_{L^\infty} \le R$ (where 
$R$ depends only on $L$). Thus
Lemma \ref{lem:convergencePh} implies that
\begin{equation*}
\| (V_{i})_{P} -  (V_{i})_{D}\wedge (V_{i})_{B}  \|_{L^\infty} \to 0
\end{equation*}
and thus
\begin{equation*}
(V_{i})_{P}\to V_{D}\wedge V_{B} \,
\text{ \rm{ in} } L^{p}(\Om)\,.
\end{equation*}
for all $p < \infty$. 
\no
Similarly Lemma \ref{lem:convergencePh} implies that 
\begin{equation*}
\Big\|(V_{i})_{h} - 
\sqrt{1+|(V_{i})_{D}|^{2}+|(V_{i})_{B}|^{2}+|(V_{i})_{P}|^{2}}
\Big\|_{L^{\infty}(\Om)} \to 0
\end{equation*}
%
%
\no
and therefore $\dsp (V_{i})_{h}\to V_{h}$ strongly in $L^{p}(\Om)$
and $V \in \Mcal$ a.e.  Since $|V_i| \le R$ a.e it follows also that $|V| \le R$ a.e.
\end{proof}

\begin{proof}[Proof of Theorem  \ref{final-thm}]
Let $L \subset \Int\Mcal^c$ be compact and suppose that $F \in L^\infty(\Om,\R^{10})$ is piecewise constant
with $F(x) \in L$ a.e. 
Assume furthermore that $\div B = \div D = 0$ in the sense of distributions where $F=(B,D,P,h)$. 

To construct the approximation $V_j$ we may assume that 
\begin{equation}
\mbox{diam}\, \Omega_i \le  \frac{1}{j}
\end{equation}
for all $i$
since otherwise we can always subdivide all the sets $\Omega_i$ with larger diameter until this condition is satisfied.

Now we apply Theorem \ref{thm99} to $\Omega_i$ and $F_i$ and we obtain a function $V^j_i: \Omega_i \to \R^{10}$ and a potential
$H^j_i : \Omega_i \to (\Mtts)^2$.
We extend $H^j_i$ and $V^j_i - F_i$ by zero outside $\Omega_i$. Using again \eqref{eq:extension}
we see that these extensions satisfy
\begin{equation*}
\chi_{\Omega_i} \left(\binom{(V^j_i)_D^T}{(V^j_i)_B^T}  - \binom{F_D^T}{F_B^T}  \right) = \Lcal(\chi_{\Omega_i} H_i^j )  
\end{equation*}
 and thus  $\div  \chi_{\Omega_i}  (V^j_i - F_i)_B = \div  \chi_{\Omega_i} (V^j_i - F_i)_D = 0$
in the sense of distributions in $\R^3$. Finally we set
\begin{equation}
V^j =  F + \sum_i  \chi_{\Omega_i} (V^j_i - F_i).
\end{equation}
Then $\div B^j = \div D^j = 0$, where $V^j = (D^j, B^j, P^j, h^j)$.  Moreover $V^j = V_i^j$ in $\Om_i$ and hence $V^j \in \Mcal$ a.e.

It remains to show that $V^j \weakst F$ in $L^\infty(\Omega)$. First consider Lipschitz continuous
test functions $\varphi$ and let $x_i$ be a point in $\Omega_i$. Then
\begin{align}
 \int_\Omega  (V^j - F)\varphi  \, dx = \sum_i   \int_{\Omega_i}    (V^j_i - F_i) \varphi \, dx
= \sum_i  \int_{\Omega_i}  (V^j_i (x)-F_i)   (\varphi(x) - \varphi(x_i)) \, dx,
\end{align}
where we used that $V^j_i - F_i$ has zero average in $\Omega_i$. Now
$|V^j_i - F_i| \le 2R$ and $|\varphi(x) - \varphi(x_i)| \le \mbox{Lip} \, \varphi  \, \, \mbox{diam}\, \Omega_i
\le \frac1j \mbox{Lip} \, \varphi   $.
Thus
\begin{equation}  \label{eq:weakcon}
\int_\Omega (V^j - F) \varphi \, dx \to 0
\end{equation}
for all Lipschitz continuous $\varphi$. Since these functions are dense in $L^1$ and $\| V^j - F \|_{L^\infty} \le 2R$
the convergence \eqref{eq:weakcon} holds for all $\varphi \in L^1$. This finishes the proof.
%
%
%
%
%
\end{proof}


\medskip
\centerline{\sc Acknowledgements}

We thank Yann Brenier for bringing our attention to this problem.
This work was largely  carried out when MP was post-doc at SISSA (Trieste).

\vspace{2mm}


\end{document}